\def\ps@pprintTitle{%
   \let\@oddhead\@empty
   \let\@evenhead\@empty
   \let\@oddfoot\@empty
   \let\@evenfoot\@oddfoot
}
\journal{Mathematical Biosciences}
\begin{document}

\newtheorem{theorem}{Theorem}[section]
\newtheorem{proposition}[theorem]{Proposition}
\newtheorem{definition}[theorem]{Definition}
\newtheorem{lemma}[theorem]{Lemma}
\newtheorem{example}[theorem]{Example}
\newtheorem{corollary}[theorem]{Corollary}
\newtheorem{remark}[theorem]{Remark}
\newtheorem{illustration}[theorem]{Illustration}

%\kbalignrighttrue
%\renewcommand{\kbldelim}{(}
%\renewcommand{\kbrdelim}{)}
%\renewcommand{\kbrowstyle}{\displaystyle}
%\renewcommand{\kbcolstyle}{\displaystyle}

\begin{frontmatter}

%% Title, authors and addresses

%% use the tnoteref command within \title for footnotes;
%% use the tnotetext command for theassociated footnote;
%% use the fnref command within \author or \address for footnotes;
%% use the fntext command for theassociated footnote;
%% use the corref command within \author for corresponding author footnotes;
%% use the cortext command for theassociated footnote;
%% use the ead command for the email address,
%% and the form \ead[url] for the home page:
%\title{Title\tnoteref{label1}}
%\tnotetext[label1]{a}
\author{Bryan S. Hernandez\fnref{label2}}
%\ead{bryan.hernandez@up.edu.ph}

\author{Eduardo R. Mendoza\fnref{label3,label4,label5,label6}}
%\ead{mendoza@lmu.de}

\author{Aurelio A. de los Reyes V\fnref{label2}}
%\ead{adlreyes@math.upd.edu.ph}
%\cortext[cor1]{Corresponding author email address: adlreyes@math.upd.edu.ph}

\title{A Computational Approach to Multistationarity of Power-Law Kinetic Systems}
%% use optional labels to link authors explicitly to addresses:
%% \author[label1,label2]{}
\address[label2]{Institute of Mathematics, University of the Philippines Diliman, Quezon City 1101, Philippines}
\address[label3]{Institute of Mathematical Sciences and Physics, University of the Philippines Los Ba{\~n}os, Laguna, 4031 Philippines}
\address[label4]{Mathematics and Statistics Department, De La Salle University, Manila, 0922 Philippines}
\address[label5]{Max Planck Institute of Biochemistry, Martinsried, Munich, Germany}
\address[label6]{LMU Faculty of Physics, Geschwister -Scholl- Platz 1, 80539 Munich, Germany}

\author{}

\address{}

\begin{abstract} 

This paper presents a computational solution to determine if a chemical reaction network endowed with power-law kinetics (PLK system) has the capacity for multistationarity, i.e., whether there exist positive rate constants such that the corresponding differential equations admit multiple positive steady states within a stoichiometric class.
The approach, which is called the ``Multistationarity Algorithm for PLK systems'' (MSA), combines (i) the extension of the ``higher deficiency algorithm'' of Ji and Feinberg for mass action to PLK systems with reactant-determined interactions, and (ii) a method that transforms any PLK system to a dynamically equivalent one with reactant-determined interactions. 
Using this algorithm, we obtain two new results: the monostationarity of a popular model of anaerobic yeast fermentation pathway, and the multistationarity of a global carbon cycle model with climate engineering, both in the generalized mass action format of biochemical systems theory. We also provide examples of the broader scope of our approach for deficiency one PLK systems in comparison to the extension of Feinberg's ``deficiency one algorithm'' to such systems.
\begin{keyword}
%keywords here, in the form: 
power-law kinetics \sep higher deficiency algorithm \sep multistationarity \sep anaerobic yeast fermentation pathway \sep global carbon cycle model
%% PACS codes here, in the form: \PACS code \sep code

%% MSC codes here, in the form: \MSC code \sep code
%% or \MSC[2008] code \sep code (2000 is the default)

\end{keyword}

\end{abstract}

\end{frontmatter}

\section{Introduction}
\label{sec:1}

Biochemical maps are widely used in biochemistry to visualize complex, non-linear interactions between molecules and other chemical entities depicted as nodes. Two types of directed interactions are captured in a biochemical map: transfer of mass (usually represented by solid arrows) from one node to another and regulation (transfer of information, often denoted by dotted line arrows) from a node to a mass transfer arrow. Biochemical maps are akin to the ``box models’’ used in chemical engineering and related disciplines. Figure \ref{yeast} shows the biochemical map for the anaerobic yeast fermentation pathway model of Galazzo and Bailey \cite{galazzo}, and Curto et al. \cite{curto}, which is provided in \cite{voit}.
\begin{figure}
\begin{center}
\label{yeast}
\includegraphics[width=12cm,height=24cm,keepaspectratio]{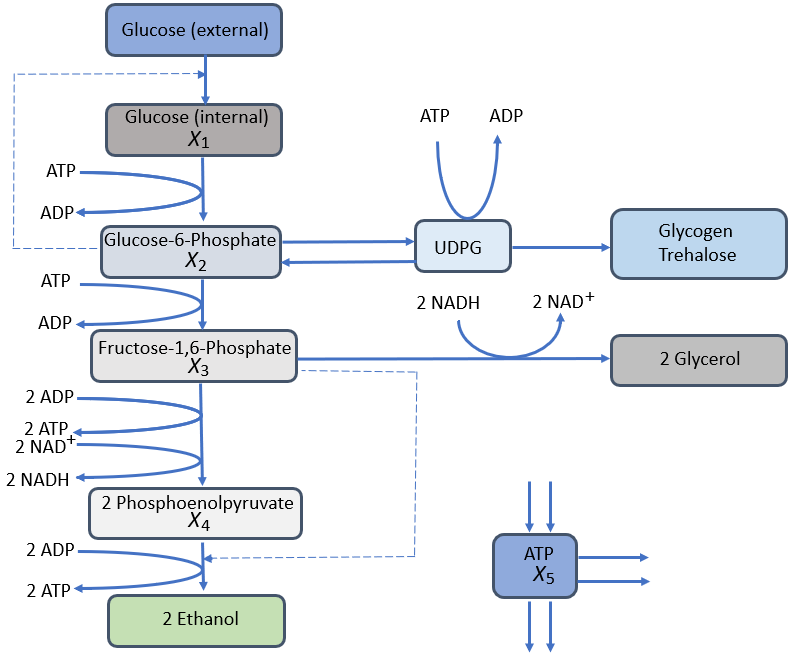}
\caption{Biochemical map of anaerobic fermentation pathway in {\it Saccharomyces cerevisiae} (adapted from \cite{voit}).}
\end{center}
\end{figure}

To describe the dynamics of a biochemical system, an interaction is typically assigned a rate function which captures some of its properties such as speed, type (activating or inhibiting) and strength. In Biochemical Systems Theory (BST), power-law functions, which are products of variables with real exponents, are the rate functions of choice. In the half century, since M. Savageau introduced the BST approach, numerous models in many areas of the life sciences have been developed. There are two main types of BST models: GMA (Generalized Mass Action) systems and S-systems. In a GMA system, a power-law rate function is assigned to each arrow in a biochemical map. In an S-system, which formally is a special GMA system, the incoming (outgoing) arrows of a node are ``lumped’’ together to a single incoming (outgoing) arrow and assigned a power-law function each (often viewed as a “weighted average” of the individual arrows in a corresponding GMA system).

S-systems have many advantages for analysis of system dynamics, including a very simple solution to the question of multistationarity. Determining the positive equilibria is equivalent to setting the two power-law functions equal and taking the logarithm results in a system of linear equations in the logarithms of original variables (i.e., in logarithmic space). The entries of the matrix of this system, which is called the ``A-matrix’’ in BST, are the differences of the corresponding exponents (called ``kinetic orders’’), the nonzero values for which a solution (i.e., an equilibrium) are sought are the ratios of the rate constants.  Linear algebra then tells us when there is a unique, infinitely many or no equilibria. For the structurally more accurate GMA systems however, there are very few general results regarding equilibria addressing the question of multistationarity. One typically could compute results for systems with particular numerical parameters.

The potential of Chemical Reaction Network Theory (CRNT) to provide an approach to the multistationarity problem of GMA systems was a motivating factor for Arceo et al. \cite{arc_jose,arceo} to introduce representations of BST systems as power-law kinetic systems. This involved translating a biochemical map to a full-fledged chemical reaction network and using the concept of ``embedded network’’ to obtain a chemical kinetic system (CKS) dynamically equivalent to the BST system. The analysis of embedded CKS of 15 BST systems (7 of which were GMA systems) revealed that their underlying networks were all non-weakly reversible and have deficiency greater than one (called ``higher deficiency’’ in CRNT). This implied that they did not fall under the coverage of the available results of CRNT on equilibria from Müller-Regensburger \cite{muller} and Talabis et al. \cite{talabis}. In addition, most of the embedded CKS were power-law systems with non-reactant-determined interactions (PL-NDK systems) and the only result for equilibria of PL-NDK systems is a recent Deficiency Zero Theorem from Fortun et al. \cite{fortun3}. The ``Multistationarity Algorithm for PLK systems'' (MSA) presented in this paper provides a computational solution for the multistationarity problem of GMA Systems.

M. Feinberg pioneered the use of computational approaches to multistationarity in chemical reaction networks in 1995 with the novel Deficiency One Algorithm (DOA) for mass action kinetics (MAK) \cite{feinberg_DOA}. It is a significant complement to the Deficiency One Theorem (DOT). In other words, if a reaction network with deficiency one that does not satisfy the conditions of the DOT, one may use the DOA. On the other hand, the higher deficiency algorithm (HDA) for MAK is given in the Ph.D. thesis of H. Ji \cite{ji}. It is a reformulation of the works of Feinberg's DOA \cite{feinberg_DOA,feinberg2} and P. Ellison's Advanced Deficiency Algorithm (ADA) which already handles MAK with higher deficiency \cite{ellison_ADA}. Their results were already implemented in CRNToolbox \cite{software}. When you input a specific reaction network, the software outputs its basic properties. It also reports whether or not the MAK system has the capacity to admit multiple equilibria.

In the MSA, we consider a subset of the reaction set called an {\it orientation}, which induces a partition of the subnetwork into equivalence classes and correspondingly, the whole network is partitioned into fundamental classes. A system of equations and/or inequalities will be obtained and checked whether a solution called {\it signature} exists. If such a signature exists, then the kinetic system has the capacity for multistationarity for particular rate constants. If no signature exists for all possible construction of the system of equations and/or inequalities, then the kinetic system does not have the capacity for multistationarity, no matter what positive values the rate constants assume.

The multistationarity algorithm MSA consists of two new results: the extension of the HDA of Ji and Feinberg for MAK systems to power-law systems with reactant-determined interactions (PL-RDK systems) and its combination with a transformation method in \cite{cfrm} to address the multistationarity of PL-NDK systems. In particular, we present the following new results:
\begin{enumerate}
\item the monostationarity (for any set of rate constants) of the anaerobic yeast fermentation pathway of Curto et al. (Section \ref{shorten:yeast}), and
\item the multistationarity (for certain sets of rate constants) of the global carbon cycle model of Heck et al. (Section \ref{ex:heck}).
\end{enumerate}
In a further new application, we apply the MSA to non-regular deficiency one PL-RDK systems, which are not covered by the extension of Feinberg's DOA to PL-RDK systems by Fortun et al. \cite{fortun}. A deficiency one regular network is required to be $t$-minimal and satisfy a ``cut pair'' condition besides being positive dependent. We also provide an example of a deficiency one PL-NDK system whose transform has deficiency 2, which can still be handled by HDA, but obviously not by DOA.

\begin{remark}
In CRNT, ``higher deficiency'' refers to networks with deficiency greater than one. We desisted from further use of the term ``Higher Deficiency Algorithm'' because the algorithm is valid for any deficiency, and in fact, as remarked above, quite useful for deficiency one networks.
\end{remark}

\section{The Multistationarity Algorithm for Power-Law Systems with Reactant-Determined Interactions}
\label{sect:RDK}

\indent  For power-law kinetic systems with reactant-determined interactions, the multistationarity algorithm MSA is the extension of the HDA of Ji and Feinberg to these systems. Using the discussion of the higher deficiency theory and algorithm for MAK in \cite{ji}, we develop a version for PL-RDK. The algorithm basically follows that of MAK where each reaction $y \to y'$ corresponds to a column in the molecularity matrix $Y$. In PL-RDK, the reaction $y \to y'$ corresponds to a row in the kinetic order matrix and correspondingly, to a column in the $T$-matrix (see Definition \ref{tmatrix} in Appendix \ref{prelim}). With this, we modified the algorithm by considering the $T$-matrix of the kinetic system, for solving system of equations, instead of the molecularity matrix of the reaction network. For a background on CRNT, the reader may refer to Appendix \ref{prelim}.

\subsection{Parameters for Multistationarity}

\indent Let $\left(\mathscr{S},\mathscr{C},\mathscr{R},K\right)$ be a PL-RDK system. If the system admits two positive and distinct equilibria, say $c^{*}$ and $c^{**}$, then $\sum\limits_{y \to y' \in \mathscr{R}} {{k_{y \to y'}}{{\left( {{c^*}} \right)}^{T_{.y}}}\left( {y' - y} \right)} 
= 0$
and
$\sum\limits_{y \to y' \in \mathscr{R}} {{k_{y \to y'}}{{\left( {{c^{**}}} \right)}^{T_{.y}}}\left( {y' - y} \right) = 0}
\label{equi2}$
where $T_{.y}$ is the column of the $T$-matrix associated with the reactant complex $y$.
Lemmas \ref{converse1} and \ref{converse2}, which provide necessary and sufficient conditions for the existence of multiple equilibria, show the importance of the introduction of two new terms $\kappa$ and $\mu$. We modified Lemma 2.2.1 in \cite{ji} to come up with these lemmas.
\begin{lemma} Let $\left(\mathscr{S},\mathscr{C},\mathscr{R},K\right)$ be a PL-RDK system. Suppose there exist a set of positive rate constants $\left\{ {{k_{y \to y'}}|y' \to y \in \mathscr{R}} \right\}$ and two distinct, positive, stoichiometrically compatible ${c^*},{c^{**}}\in \mathbb{R}_{ > 0}^\mathscr{S}$. Then there exist $\kappa  \in \mathbb{R}_{ > 0}^\mathscr{R}$ and $\mu  \in {\mathbb{R}^\mathscr{S}}$ such that ${\mu _s} = \ln \left( {\dfrac{{{c_s}^*}}{{{c_s}^{**}}}} \right) \ \forall s \in \mathscr{S}$ and ${\kappa _{y \to y'}} = {k_{y \to y'}}{c}{^{**T_{.y}}}$ $\forall$ $y \to y' \in \mathscr{R}$
where $\mu$ is both nonzero and stoichiometrically compatible with (the stoichiometric subspace) $S$.
\label{converse1}
\end{lemma}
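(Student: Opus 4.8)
The plan is to prove this by direct construction: one simply \emph{defines} $\mu$ and $\kappa$ by the formulas appearing in the statement and then verifies the three claimed properties, namely that $\kappa$ is strictly positive, that $\mu$ is nonzero, and that $\mu$ is stoichiometrically compatible with $S$. The argument is a transcription of the proof of Lemma 2.2.1 in \cite{ji}, with the kinetic-order column $T_{.y}$ of the $T$-matrix playing the role of the reactant column of the molecularity matrix; the PL-RDK hypothesis is precisely what makes this substitution legitimate, since for reactant-determined interactions the kinetic order vector of a reaction $y \to y'$ depends only on the reactant complex $y$, so that the column $T_{.y}$ is well defined.

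Concretely, I would set $\mu_{s} := \ln\!\left(c_{s}^{*}/c_{s}^{**}\right)$ for each $s \in \mathscr{S}$ and $\kappa_{y \to y'} := k_{y \to y'}\,(c^{**})^{T_{.y}}$ for each $y \to y' \in \mathscr{R}$. Since $c^{*}, c^{**} \in \mathbb{R}_{>0}^{\mathscr{S}}$, each ratio $c_{s}^{*}/c_{s}^{**}$ is a positive real, so $\mu \in \mathbb{R}^{\mathscr{S}}$ is well defined; moreover $(c^{**})^{T_{.y}} = \prod_{s \in \mathscr{S}} (c_{s}^{**})^{T_{sy}}$ is a finite product of positive reals raised to real powers, hence positive, and since $k_{y \to y'} > 0$ we obtain $\kappa \in \mathbb{R}_{>0}^{\mathscr{R}}$. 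Non-vanishing of $\mu$ is immediate: $c^{*} \neq c^{**}$ forces $c_{s}^{*} \neq c_{s}^{**}$ for at least one $s$, and then $\mu_{s} = \ln\!\left(c_{s}^{*}/c_{s}^{**}\right) \neq 0$.

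The only step carrying real content is the stoichiometric compatibility of $\mu$. Writing $c_{s}^{*} - c_{s}^{**} = c_{s}^{**}\,(e^{\mu_{s}} - 1)$ and using $c_{s}^{**} > 0$ shows that $c_{s}^{*} - c_{s}^{**}$ and $\mu_{s}$ have the same sign for every coordinate $s$ (equivalently, this is just the monotonicity of $\ln$ together with $\ln 1 = 0$). Hence $\mu$ agrees in sign, coordinate by coordinate, with the vector $c^{*} - c^{**}$, which lies in the stoichiometric subspace $S$ because $c^{*}$ and $c^{**}$ are stoichiometrically compatible by hypothesis; together with the previous paragraph this is exactly the assertion that $\mu$ is nonzero and stoichiometrically compatible with $S$. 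For completeness one would also record the identity that makes this repackaging useful downstream: substituting $c_{s}^{*} = c_{s}^{**} e^{\mu_{s}}$ and using $(c^{*})^{T_{.y}} = (c^{**})^{T_{.y}} e^{\mu \cdot T_{.y}}$, the two equilibrium conditions at $c^{*}$ and $c^{**}$ become $\sum_{y \to y'} \kappa_{y \to y'} e^{\mu \cdot T_{.y}}(y' - y) = 0$ and $\sum_{y \to y'} \kappa_{y \to y'}(y' - y) = 0$. I do not anticipate any genuine obstacle here: the statement is essentially a reformulation, and the only thing needing care is that the sign comparison and the exponent arithmetic use nothing beyond the positivity of $c^{*}, c^{**}$ and the reactant-determined structure.
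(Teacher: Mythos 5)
Your proposal is correct and follows essentially the same route as the paper's own proof: define $\mu$ and $\kappa$ by the stated formulas, note positivity of $\kappa$ and nonvanishing of $\mu$ from $c^* \neq c^{**}$, and obtain stoichiometric compatibility by observing that $\mu_s$ and $c_s^* - c_s^{**}$ share signs coordinatewise while $c^* - c^{**} \in S$. The only cosmetic difference is that you derive the sign agreement from the identity $c_s^* - c_s^{**} = c_s^{**}(e^{\mu_s}-1)$ where the paper invokes monotonicity of $\ln$; these are equivalent.
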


\begin{lemma} Let $\left(\mathscr{S},\mathscr{C},\mathscr{R},K\right)$ be a PL-RDK system. Suppose  there exist $\kappa  \in \mathbb{R}_{ > 0}^\mathscr{R}$ and $0 \ne \mu  \in {\mathbb{R}^\mathscr{S}}$ which is stoichiometrically compatible with $S$. Then there exist a set of positive rate constants $\left\{ {{k_{y \to y'}}|y' \to y \in \mathscr{R}} \right\}$ and two distinct, positive, stoichiometrically compatible ${c^*},{c^{**}}\in \mathbb{R}_{ > 0}^\mathscr{S}$.
\label{converse2}
\end{lemma}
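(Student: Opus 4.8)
Lemma~\ref{converse2} is the exact converse of Lemma~\ref{converse1}, so the plan is to invert the substitutions $\mu_s=\ln(c^{*}_s/c^{**}_s)$ and $\kappa_{y\to y'}=k_{y\to y'}\,(c^{**})^{T_{.y}}$ from that lemma: starting from the given $\kappa$ and $\mu$, I would reconstruct, in order, a base point $c^{**}$, its partner $c^{*}$, and the rate constants $k$, and then verify the two steady-state equations.

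First I would recover a suitable $c^{**}$ from the hypothesis that $\mu$ is stoichiometrically compatible with $S$. The elementary fact in play is that $e^{x}-1$ has the same sign as $x$; since $\mu$ is stoichiometrically compatible with $S$, one may choose a vector $v\in S$ with exactly the sign pattern of $\mu$ (vanishing in the same coordinates, of the same sign in the others), and then solve $c^{**}_s\,(e^{\mu_s}-1)=v_s$ coordinatewise for a positive $c^{**}\in\mathbb{R}_{>0}^{\mathscr{S}}$: take $c^{**}_s=v_s/(e^{\mu_s}-1)>0$ where $\mu_s\neq0$, and let $c^{**}_s$ be any positive number where $\mu_s=0$. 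Setting $c^{*}_s:=c^{**}_s\,e^{\mu_s}$ then yields two positive vectors with $c^{*}-c^{**}=v\in S$, hence stoichiometrically compatible, and with $c^{*}\neq c^{**}$ because $\mu\neq0$ forces $c^{*}_s\neq c^{**}_s$ for some $s$.

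Next I would set $k_{y\to y'}:=\kappa_{y\to y'}/(c^{**})^{T_{.y}}$, which is positive since $\kappa_{y\to y'}>0$ and $c^{**}>0$, and then check that $c^{*}$ and $c^{**}$ are both steady states of $(\mathscr{S},\mathscr{C},\mathscr{R},K)$ with these rate constants. For $c^{**}$, the equation $\sum_{y\to y'\in\mathscr{R}}k_{y\to y'}(c^{**})^{T_{.y}}(y'-y)=0$ collapses, by the definition of $k$, to $\sum_{y\to y'\in\mathscr{R}}\kappa_{y\to y'}(y'-y)=0$. For $c^{*}$, the power-law identity $(c^{*})^{T_{.y}}=\prod_{s}(c^{**}_s e^{\mu_s})^{T_{sy}}=(c^{**})^{T_{.y}}\exp(\sum_{s}\mu_s T_{sy})$ turns its steady-state equation into $\sum_{y\to y'\in\mathscr{R}}\kappa_{y\to y'}\exp(\sum_{s}\mu_s T_{sy})(y'-y)=0$. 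These are the $(\kappa,\mu)$-forms of the two equilibrium equations displayed at the start of this section, transported through the parametrization of Lemma~\ref{converse1}, and hence exactly the conditions the hypotheses impose on $\kappa$ and $\mu$; so the construction closes up and the required $k$, $c^{*}$, $c^{**}$ have been exhibited.

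I expect the only genuinely delicate point to be the first step: extracting from the abstract condition ``$\mu$ stoichiometrically compatible with $S$'' an honest pair $c^{*},c^{**}$ that simultaneously lies in one stoichiometric class and realizes the prescribed logarithmic ratios $\ln(c^{*}_s/c^{**}_s)=\mu_s$. Once the sign-matching device above is in place, everything else is the bookkeeping of Lemma~\ref{converse1} read backwards. The leftover freedom in the choice of $v$ (equivalently of $c^{**}$) is harmless, since any admissible choice then determines $k$ consistently; equivalently, one may fix $c^{**}$ first and then take $v:=c^{*}-c^{**}$, which is the route Lemma~\ref{converse1} travels in the opposite direction.
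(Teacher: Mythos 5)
Your construction is essentially identical to the paper's: the paper likewise picks $\sigma\in S$ sign-compatible with $\mu$, sets $c_s^{**}=\sigma_s/(e^{\mu_s}-1)$ and $c_s^{*}=\sigma_s e^{\mu_s}/(e^{\mu_s}-1)$ (i.e.\ $c_s^{*}=c_s^{**}e^{\mu_s}$) when $\mu_s\neq 0$, takes $c_s^{*}=c_s^{**}=p>0$ when $\mu_s=0$, and defines $k_{y\to y'}=\kappa_{y\to y'}/(c^{**})^{T_{.y}}$. Your added verification that $c^{*},c^{**}$ are in fact equilibria (via the $(\kappa,\mu)$-forms of the steady-state equations) is a harmless strengthening that the paper leaves implicit.
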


The first two equations in this section can be expressed as
$$
\sum\limits_{y \to y' \in {\mathscr{R}}} {{\kappa _{y \to y'}}\left( {y' - y} \right) = 0} 
\label{equi3}
{\rm{ \ and \ }}
\sum\limits_{y \to y' \in {\mathscr{R}}} {{\kappa _{y \to y'}}{e^{{T_{.y }\cdot \mu}}}\left( {y' - y} \right) = 0} 
\label{equi4}
$$
where $\mu  = \ln \left( {\dfrac{{{c^*}}}{{{c^{**}}}}} \right)$. The equivalence can be established by letting ${\mu _s} = \ln \left( {\dfrac{{{c_s}^*}}{{{c_s}^{**}}}} \right) \ \forall s \ \in \mathscr{S}$. Hence, ${e^{{\mu _s}}} = \dfrac{{{c_s}^*}}{{{c_s}^{**}}}\;\forall s\; \in \mathscr{S}$.
Then,
$${e^{{T_{.y}} \cdot \mu }} = \prod\limits_{s\; \in {\mathscr{S}}} {{e^{{\mu _s}{{\left( {T{._y}} \right)}_s}}} = } {\prod\limits_{s\; \in {\mathscr{S}}} {\left( {\dfrac{{{c_s}^*}}{{{c_s}^{**}}}} \right)} ^{{{\left( {T{._y}} \right)}_s}}} = \dfrac{{{c_{}}{{^*}^{{T_{.y}}}}}}{{{c_{}}{{^{**}}^{{T_{.y}}}}}}.$$
Thus, ${c_{}}{^{{*{{T_{.y}}}}}} = {e^{{T_{.y}} \cdot \mu }}{c_{}}{^{{{**}{{T_{.y}}}}}}$ and 
 ${k_{y \to y'}}{c_{}}{^{*{{{T_{.y}}}}}} = {k_{y \to y'}}{e^{{T_{.y}} \cdot \mu }}{c_{}}{^{{**}{{T_{.y}}}}} = {\kappa _{y \to y'}}{e^{{T_{.y}} \cdot \mu }}$
since we set ${\kappa _{y \to y'}} = {k_{y \to y'}}{c}{^{**T_{.y}}}$ $\forall$ $y \to y' \in \mathscr{R}$.

\subsection{Fundamental Classes of Reactions}

In this subsection, we introduce a subset of the reaction set $\mathscr{R}$ known as an ``orientation'', denoted by $\mathscr{O}$, which will lead to the ``unique'' partition of $\mathscr{R}$.
\begin{definition} A subset $\mathscr{O}$ of $\mathscr{R}$ is said to be an {\bf orientation} if for every reaction $y \to y' \in \mathscr{R}$, either $y \to y' \in \mathscr{O}$ or $y' \to y \in \mathscr{O}$, but not both.
\end{definition}
For an orientation $\mathscr{O}$, we define a linear map ${L_\mathscr{O}}:{\mathbb {R}^\mathscr{O}} \to S$ such that
\[{L_\mathscr{O}}(\alpha)= \sum\limits_{y \to y' \in \mathscr{O}} {{\alpha _{y \to y'}}\left( {y' - y} \right)}.\]

\begin{remark}
If the network $\mathscr{R}$ has no reversible reactions, then there is only one orientation $\mathscr{O}=\mathscr{R}$. In this case, $L_\mathscr{R} = YI_a = N$, the stoichiometric matrix.
\end{remark}

\begin{illustration}
\label{sacc:cer}
Consider the GMA model of anaerobic fermentation pathway of {\it Saccharomyces cerevisiae} in Chapter 8 of \cite{voit} depicted in Figure \ref{yeast}. The model has the following reactions.
\[ \begin{array}{lll}
R_1: X_2 \to X_1 + X_2 & \ \ \ &  R_8: X_3 + X_5  \to X_4 + X_5\\
R_2: X_1 + X_5  \to X_2 + X_5  &  \ \ \ & R_9: X_3 + X_5 \to X_3 + 2X_5 \\
R_3: 2X_5 + X_1  \to X_5 + X_1 &  \ \ \ & R_{10}: X_3 + X_4 + X_5 \to X_4 + X_5\\
R_4: X_2 + X_5  \to X_3 + X_5  &  \ \ \ & R_{11}: X_3 + X_4 + X_5 \to X_3 + X_5  \\
R_5: 2X_5 + X_2  \to X_5 + X_2 & \ \ \  & R_{12}: X_3 + X_4 + X_5  \to X_3 + X_4 + 2X_5\\
R_6: X_2 + X_5  \to X_5  &  \ \ \ & R_{13}: 2X_5 \to X_5 \\
R_7: X_2 + X_5  \to X_2  &  \\
\end{array}\]
In \cite{arceo}, the system was referred to as ERM0-G and it was shown that the system is PL-RDK. Each reaction is irreversible. Thus, ${\mathscr{O}}=\mathscr{R}.$ 
\end{illustration}

Let $\left\{ {{\omega _{y \to y'}}|y \to y' \in {\mathbb{R}^\mathscr{O}}} \right\}$ be the standard basis for ${{\mathbb{R}^\mathscr{O}}}$. Reactions ${y \to y'}$, $\overline y  \to \overline y '\in \mathscr{O}$ are related by $\sim$ if there exists a nonzero $\alpha$ such that ${\omega _{y \to y'}} - \alpha {\omega _{\overline y  \to \overline y '}} \in Ker ^{\perp}L_\mathscr{O}$. It is easy to see that the relation $\sim$ is indeed an equivalence relation. We define the {\bf equivalence class} of a reaction $y \to y'$ under the relation $\sim$ by $\left[y \to y' \right]=\{{\widetilde y  \to \widetilde y '}\in \mathscr{O}|{\widetilde y  \to \widetilde y '} \sim {y \to y' }\}$. We call the equivalence classes as $P_i$ where $i$ runs through the number of classes induced by the relation. In particular, if there is a reaction such that ${\omega _{y \to y'}} \in Ker ^{\perp}L_\mathscr{O}$, we label the equivalence class containing $y \to y'$ as $P_0$. We set $P_0=\emptyset$ if there is no such equivalence class exists. 

\begin{lemma} {\cite{ji}} Let $\left(\mathscr{S},\mathscr{C},\mathscr{R}\right)$ be reaction network and $\mathscr{O}$ be an orientation. Let $\alpha \ne 0$. Then the following holds:
\begin{itemize}
\item[i.] $y \to y' \in {P_0}$
if and only if ${x_{y \to y'}} = 0$ for each $x\in KerL_\mathscr{O}$, and
\item[ii.] ${\omega _{y \to y'}} - \alpha {\omega _{\overline y  \to \overline y '}} \in Ker ^{\perp}L_\mathscr{O}$ if and only if ${x_{y \to y'}} = \alpha {x_{\overline y  \to \overline y '}} \ \forall x \in Ker L_\mathscr{O}$.
\end{itemize}
\label{equiv_classes1}
\end{lemma}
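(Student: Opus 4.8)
The plan is to reduce both statements to elementary facts about the standard inner product on $\mathbb{R}^\mathscr{O}$ together with the duality $\left((\ker L_\mathscr{O})^\perp\right)^\perp = \ker L_\mathscr{O}$. Throughout, write $\langle\cdot,\cdot\rangle$ for the standard inner product, so that $\langle \omega_{y\to y'},x\rangle = x_{y\to y'}$ for every $x\in\mathbb{R}^\mathscr{O}$ and every $y\to y'\in\mathscr{O}$; recall also that a vector $w$ lies in $Ker^\perp L_\mathscr{O}$ precisely when $\langle w,x\rangle=0$ for all $x\in Ker L_\mathscr{O}$.

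For part (ii), I would simply expand the membership condition coordinatewise. We have $\omega_{y\to y'}-\alpha\,\omega_{\overline y\to\overline y'}\in Ker^\perp L_\mathscr{O}$ if and only if $\langle \omega_{y\to y'}-\alpha\,\omega_{\overline y\to\overline y'},\,x\rangle=0$ for every $x\in Ker L_\mathscr{O}$, and by bilinearity the left-hand side equals $x_{y\to y'}-\alpha\,x_{\overline y\to\overline y'}$. Hence the condition is exactly $x_{y\to y'}=\alpha\,x_{\overline y\to\overline y'}$ for all $x\in Ker L_\mathscr{O}$, which is the claim. (The hypothesis $\alpha\ne 0$ is not needed for this equivalence per se; it enters only to make $\sim$ symmetric.)

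For part (i), the first step is to show that $y\to y'\in P_0$ if and only if $\omega_{y\to y'}\in Ker^\perp L_\mathscr{O}$. If $P_0=\emptyset$, then by the definition of $P_0$ no reaction has its standard basis vector in $Ker^\perp L_\mathscr{O}$, so both sides are vacuously false and there is nothing to prove; assume therefore $P_0\ne\emptyset$, say $\omega_{z\to z'}\in Ker^\perp L_\mathscr{O}$ with $z\to z'\in P_0$. If $\omega_{y\to y'}\in Ker^\perp L_\mathscr{O}$, then since $Ker^\perp L_\mathscr{O}$ is a subspace we get $\omega_{y\to y'}-\omega_{z\to z'}\in Ker^\perp L_\mathscr{O}$, so $y\to y'\sim z\to z'$ and thus $y\to y'\in P_0$. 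Conversely, if $y\to y'\in P_0=[z\to z']$, there is a nonzero $\beta$ with $\omega_{y\to y'}-\beta\,\omega_{z\to z'}\in Ker^\perp L_\mathscr{O}$; since $\beta\,\omega_{z\to z'}\in Ker^\perp L_\mathscr{O}$ as well, adding the two vectors shows $\omega_{y\to y'}\in Ker^\perp L_\mathscr{O}$. Finally, combine this with the inner-product characterization: $\omega_{y\to y'}\in Ker^\perp L_\mathscr{O}$ iff $\langle\omega_{y\to y'},x\rangle=0$ for all $x\in Ker L_\mathscr{O}$ iff $x_{y\to y'}=0$ for all $x\in Ker L_\mathscr{O}$, which is (i).

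The only genuinely delicate point is the bookkeeping in part (i): one must keep straight that $P_0$ is defined as an \emph{equivalence class} under $\sim$ through any reaction whose standard basis vector lies in $Ker^\perp L_\mathscr{O}$, rather than directly as the set of all such reactions, and then verify that these two descriptions agree — together with the degenerate case $P_0=\emptyset$. Everything else is a one-line unwinding of definitions, so I expect the proof to be short.
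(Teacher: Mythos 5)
Your proof is correct. Note that the paper itself gives no proof of this lemma --- it is cited from Ji's thesis \cite{ji} and omitted from the appendix of proofs --- so there is nothing to diverge from; your argument is the standard definitional unwinding via the inner-product characterization of $Ker^{\perp}L_\mathscr{O}$, and you rightly isolate the only non-trivial bookkeeping, namely that membership in the equivalence class $P_0$ is equivalent to $\omega_{y \to y'} \in Ker^{\perp}L_\mathscr{O}$ itself (handling the case $P_0 = \emptyset$), after which both parts follow from $\langle \omega_{y \to y'}, x\rangle = x_{y \to y'}$.
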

Lemma \ref{equiv_classes1} leads to partitioning the orientation $\mathscr{O}$ into the zeroth equivalence class $P_0$, if it exists, and the nonzeroth equivalence classes $P_i$ where $i \ge 1$. 
\begin{remark} \cite{ji}
\label{rem:equivclass}
Let $\left\{ {{v^l}} \right\}_{l = 1}^d$ be a basis for $Ker{L_\mathscr{O}}$.
If for $y \to y' \in \mathscr{O}$, $v_{y \to y'}^l =0$ for all $1 \le l \le d$ then the reaction $y \to y' $ belongs to the zeroth equivalence class $P_0$.
For $y \to y', {\overline y  \to \overline y '} \in \mathscr{O} \backslash P_0$, if there exists $\alpha \ne 0$ such that $v_{y \to y'}^l = \alpha v_{\overline y  \to \overline y '}^l$ for all $1 \le l \le d$, then the two reactions belong to the same equivalence class.
\end{remark}
The reactions $y \to y'$ and $\overline y  \to \overline y '$ in $\mathscr{R}$ belong to the same {\bf fundamental class} if at least one of the following statements holds \cite{ji}.
\begin{itemize}
\item[i.] $y \to y'$ and $\overline y  \to \overline y '$ are the same reaction.
\item[ii.] $y \to y'$ and $\overline y  \to \overline y '$ are reversible pair.
\item[iii.] Either $y \to y'$ or $y' \to y$, and either $\overline y  \to \overline y '$ or $\overline y'  \to \overline y $ are in the same equivalence class on $\mathscr{O}$.
\end{itemize}

We label the fundamental class containing $P_i$ as $C_i$. A fundamental (equivalence) class is said to be {\bf reversible} if each reaction in the fundamental (equivalence) class is reversible (with respect to $\mathscr{R}$). If at least one of the reactions is irreversible, then the class is said to be {\bf nonreversible}. If $P_i$ is nonreversible, we pick any irreversible reaction to be the representative of $P_i$. Otherwise, we pick any irreversible reaction as the representative. We identify the $i$th reaction in $W$ with ${y_i} \to {y_i}'$. We let $W$ be the collection of all such representatives from $P_i$ where $i=1,2,...,w$.

\begin{illustration}
\label{sacc:cer2}
We consider ERM0-G  in Illustration \ref{sacc:cer}.
Below is basis for $Ker{L_\mathscr{O}}$ obtained by solving $\sum\limits_{y \to y' \in \mathscr{O}} {{\alpha _{y \to y'}}\left( {y' - y} \right)}=0$.
\[\bordermatrix{%
  & v_1 & v_2 & v_3 & v_4 & v_5 &  v_6 & v_7 & v_8\cr
R_1 & 0 & 1 & 0  & 0 & 1 & 1& 0 & 0\cr
R_2 & 0 & 1 & 0  & 0 & 1 & 1& 0 & 0\cr
R_3 & -1 & 0 & -1  & 1 & 0 & 0& 1 & -1\cr
R_4 & 0 & 0 & 0  & 0 & 1 & 1& 0 & 0\cr
R_5 & 1 & 0 & 0  & 0 & 0 & 0& 0 & 0\cr
R_6 & 0 & 1 & 0  & 0 & 0 & 0& 0 & 0\cr
R_7 & 0 & 0 & 1  & 0 & 0 & 0& 0 & 0\cr
R_8 & 0 & 0 & 0  & 0 & 0 & 1& 0 & 0\cr
R_9 & 0 & 0 & 0  & 1 & 0 & 0& 0 & 0\cr
R_{10} & 0 & 0 & 0  & 0 & 1 & 0& 0 & 0\cr
R_{11} & 0 & 0 & 0  & 0 & 0 & 1& 0 & 0\cr
R_{12} & 0 & 0 & 0  & 0 & 0 & 0& 1 & 0\cr
R_{13} & 0 & 0 & 0  & 0 & 0 & 0& 0 & 1\cr
}\]
Notice that there is no row with zero entry so $P_0$ is empty. We partition $\mathscr{O}$ into its equivalence classes.
The equivalence classes and the fundamental classes based on the given basis for $Ker{L_\mathscr{O}}$ are provided in the first and second columns of Table \ref{tab:PiCiEMR0G}, respectively.
\begin{table}
% table caption is above the table
\caption{Nonterminal and terminal strong linkage classes of ERM0-G}
\label{tab:PiCiEMR0G}       % Give a unique label
% For LaTeX tables use
\begin{tabular}{lllll}
\hline\noalign{\smallskip}
Equivalence & Fundamental & Nonterminal Strong & Terminal Strong\\
Class & Class & Linkage Class & Linkage Class\\
\noalign{\smallskip}\hline\noalign{\smallskip}
$P_1 =\{ R_1,R_2\}$ & $C_{1}=\{ R_1,R_2\}$ & $\{X_{2}\}$ & $\{X_1 + X_{2} \}$\\
 &  & $\{X_{1} + X_{5}\}$ & $\{X_2 + X_{5} \}$\\
$P_2 =\{ R_3\}$ & $C_{2}=\{ R_3\}$ & $\{2X_{5} + X_{1}\}$ & $\{X_{5} + X_{1}\}$\\
$P_3 =\{ R_4\}$ & $C_{3}=\{ R_4\}$ & $\{X_2+ X_5\}$ & $\{X_3+ X_5 \}$\\
$P_4 =\{ R_5\}$ & $C_{4}=\{ R_5\}$ & $\{2X_5+X_2 \}$ & $\{ X_5+X_2  \}$\\
$P_5 =\{ R_6\}$ & $C_{5}=\{ R_6\}$ & $\{X_2+X_5 \}$ & $\{X_5 \}$\\
$P_6 =\{ R_7\}$ & $C_{6}=\{ R_7\}$ & $\{ X_2 +X_5\}$ & $\{ X_2  \}$\\
$P_7 =\{ R_8,R_{11}\}$ & $C_{7}=\{ R_8,R_{11}\}$ & $\{ X_3+X_5  \}$ & $\{ X_4 + X_5  \}$\\
 &  & $\{ X_3+X_4+X_5  \}$ & $\{ X_3 + X_5  \}$\\
$P_8 =\{ R_9\}$ & $C_{8}=\{ R_9\}$ & $\{ X_3+X_5 \}$ & $\{ X_3 + 2X_5  \}$\\
$P_9 =\{ R_{10}\}$ & $C_{9}=\{ R_{10}\}$ & $\{ X_3+X_4+X_5 \}$ & $\{ X_4+X_5  \}$\\
$P_{10} =\{ R_{12}\}$ & $C_{10}=\{ R_{12}\}$ & $ \{ X_3+X_4+X_5 \}$ & $\{ X_3+X_4+2X_5  \}$\\
$P_{11} =\{ R_{13}\}$ & $C_{11}=\{ R_{13}\}$ & $ \{2X_5 \}$ & $\{ X_5 \}$\\
\noalign{\smallskip}\hline\noalign{\smallskip}
\end{tabular}
\end{table}
\end{illustration}

A fundamental class $C_i$ with $0 \le i \le w$ is said to be degenerate if $g_{y_i\to y_{i}'}=0$ while a fundamental class $C_i$ with $1 \le i \le w$ is said to be nondegenerate if $g_{y_i\to y_{i}'}\ne 0$. Let $i \ge 1$. For each nondegenerate fundamental class $C_i$, we assume a 3-shelf bookcase to store all reactions in $C_i$. Let $y \to y'$ be a reaction in a nondegenerate fundamental class $C_i$, and 
${\rho _{{y_i} \to {y_i}'}} = \dfrac{{{h_{{y_i} \to {y_i}'}}}}{{{g_{{y_i} \to {y_i}'}}}}$ where ${g_{{y_i} \to {y_i}'}} \ne 0$ and $i = 1,...,w$.
 Then we define the shelving of $y \to y'$ in the following manner.
\begin{itemize}
\item[i.] $y \to y'$ is on the upper shelf if ${e^{{T_{.y}} \cdot \mu }} > \rho_{y_i\to y_{i}'}$.
\item[ii.] $y \to y'$ is on the lower shelf if ${e^{{T_{.y}} \cdot \mu }} < \rho_{y_i\to y_{i}'}$.
\item[iii.] $y \to y'$ is on the middle shelf if ${e^{{T_{.y}} \cdot \mu }} = \rho_{y_i\to y_{i}'}$.
\end{itemize}

Let ${M_{{y_i} \to {y_i}'}} = \ln {\rho _{{y_i} \to {y_i}'}}$ if ${\rho _{{y_i} \to {y_i}'}}>0$. Otherwise, we take ${M_{{y_i} \to {y_i}'}}$ to be an arbitrarily large and negative number.
We restrict the problem by considering a representative of each of the classes instead of all the elements of the whole class. Lemmas \ref{rev_irrev4} and \ref{rev_irrev2s} are extensions of Lemma 2.8.7 and Proposition 2.8.1 in {\cite{ji}} from MAK to PL-RDK.

\begin{lemma} Let $\left(\mathscr{S},\mathscr{C},\mathscr{R},K\right)$ be a PL-RDK system and $\mathscr{O}$ be an orientation. Let $\kappa  \in \mathbb{R}_{ > 0}^\mathscr{R}$, and $\mu  \in {\mathbb{R}^\mathscr{S}}$. Let $g,h \in \mathbb {R}^\mathscr{O}$ such that\\
${g_{y \to y'}} = \left\{ \begin{array}{ll}
{\kappa _{y \to y'}} - {\kappa _{y' \to y}}&{\rm{   if }} \ y \to y' \in \mathscr{O}{\rm{\  is \ reversible}}\\
{\kappa _{y \to y'}}&{\rm{          if }} \ y \to y' \in \mathscr{O}{\rm{ \  is \  irreversible}}
\end{array} \right.$
and\\
${h_{y \to y'}} = \left\{ \begin{array}{ll}
{\kappa _{y \to y'}}{e^{{T_{.y}} \cdot \mu }} - {\kappa _{y' \to y}}{e^{{T_{.y'}} \cdot \mu }}&{\rm{  if }} \ y \to y' \in \mathscr{O}{\rm{\ is \ reversible}}\\
{\kappa _{y \to y'}}{e^{{T_{.y}} \cdot \mu }}&{\rm{           if }} \ y \to y' \in \mathscr{O}{\rm{\ is \ irreversible}}
\end{array} \right..$\\
For $i=1,2,...,w$, let ${P_i}$ be the equivalence class with ${y_i} \to {y_i}'$ as representative. Moreover, let ${{\rho _{{y_i} \to {y_i}'}} = \dfrac{{{h_{{y_i} \to {y_i}'}}}}{{{g_{{y_i} \to {y_i}'}}}}}$ for nondegenerate fundamental classes ${C_i}$.
\begin{itemize}
\item[i.] If $y \to y' \in {P_i}\ (i=1,2,...,w)$ is irreversible then ${g_{y \to y'}} > 0$, ${h_{y \to y'}} > 0$, and ${M_{{y_i} \to {y_i}'}} = {{{T_{.y}} \cdot \mu }}$.
\item[ii.] Suppose $y \to y' \in {P_i} \ (i=1,2,...,w)$ is reversible and ${C_i}$ is nondegenerate.
\begin{itemize}
\item[a.] If ${g_{y_i \to y_i'}} > 0$ and $y \to y'$ is on the upper shelf, then ${M_{{y_i} \to {y_i}'}} < {{{T_{.y}} \cdot \mu }} < {{{T_{.y'}} \cdot \mu }}$.
\item[b.] If ${g_{y_i \to y_i'}} > 0$ and $y \to y'$ is on the middle shelf, then ${M_{{y_i} \to {y_i}'}} = {{{T_{.y}} \cdot \mu }} = {{{T_{.y'}} \cdot \mu }}$.
\item[c.] If ${g_{y_i \to y_i'}} > 0$ and $y \to y'$ is on the lower shelf, then ${M_{{y_i} \to {y_i}'}} > {{{T_{.y}} \cdot \mu }} > {{{T_{.y'}} \cdot \mu }}$.
\item[d.] If ${g_{y_i \to y_i'}} < 0$ and $y \to y'$ is on the upper shelf, then ${M_{{y_i} \to {y_i}'}} < {{{T_{.y'}} \cdot \mu }} < {{{T_{.y}} \cdot \mu }}$.
\item[e.] If ${g_{y_i \to y_i'}} < 0$ and $y \to y'$ is on the middle shelf, then ${M_{{y_i} \to {y_i}'}} = {{{T_{.y'}} \cdot \mu }} = {{{T_{.y}} \cdot \mu }}$.
\item[f.] If ${g_{y_i \to y_i'}} < 0$, and $y \to y'$ is on the lower shelf, then ${M_{{y_i} \to {y_i}'}} > {{{T_{.y'}} \cdot \mu }} > {{{T_{.y}} \cdot \mu }}$.
\end{itemize}
\item[iii.] Suppose $y \to y' \in {P_i}\ (i=1,2,...,w)$ is reversible and ${C_i}$ is degenerate.
\begin{itemize}
\item[a.] If ${h_{y_i \to y_i'}} > 0$ then ${{{T_{.y}} \cdot \mu }} > {{{T_{.y'}} \cdot \mu }}$.
\item[b.] If ${h_{y_i \to y_i'}} = 0$ then ${{{T_{.y}} \cdot \mu }} = {{{T_{.y'}} \cdot \mu }}$.
\item[c.] If ${h_{y_i \to y_i'}} < 0$ then ${{{T_{.y}} \cdot \mu }} < {{{T_{.y'}} \cdot \mu }}$.
\end{itemize}
\item[iv.] If $y \to y' \in {P_0}$ is reversible then ${{{T_{.y}} \cdot \mu }} = {{{T_{.y'}} \cdot \mu }}$.
\end{itemize}
\label{rev_irrev4}
\end{lemma}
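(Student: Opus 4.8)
The plan is to fix the data $\kappa\in\mathbb{R}_{>0}^{\mathscr{R}}$ and $\mu\in\mathbb{R}^{\mathscr{S}}$ and prove the four parts by a case analysis resting on three facts: (1) $\kappa$ and every exponential $e^{T_{.y}\cdot\mu}$ are strictly positive; (2) on each nondegenerate equivalence class the ratio $h_{y\to y'}/g_{y\to y'}$ does not depend on the reaction, so $\rho_{y_i\to y_i'}$ really is a class invariant; and (3) an elementary weighted-mean identity that fixes the order of $e^{T_{.y}\cdot\mu}$, $e^{T_{.y'}\cdot\mu}$ and $\rho_{y_i\to y_i'}$ for a reversible reaction. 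We use throughout that $g$ and $h$ lie in $KerL_{\mathscr{O}}$; indeed $L_{\mathscr{O}}(g)=\sum_{y\to y'\in\mathscr{R}}\kappa_{y\to y'}(y'-y)$ and $L_{\mathscr{O}}(h)=\sum_{y\to y'\in\mathscr{R}}\kappa_{y\to y'}e^{T_{.y}\cdot\mu}(y'-y)$, so this is exactly the pair of equilibrium identities recorded at the start of Section \ref{sect:RDK}.

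First I would settle fact (2) and the easy parts. Given $y\to y'$ and $\overline{y}\to\overline{y}'$ in a nonzeroth class $P_i$, Lemma \ref{equiv_classes1}(ii) supplies a scalar $\alpha\neq 0$ with $x_{y\to y'}=\alpha x_{\overline{y}\to\overline{y}'}$ for every $x\in KerL_{\mathscr{O}}$; applying this to $x=g$ and to $x=h$ gives $g_{y\to y'}=\alpha g_{\overline{y}\to\overline{y}'}$ and $h_{y\to y'}=\alpha h_{\overline{y}\to\overline{y}'}$, so when $C_i$ is nondegenerate ($g_{y_i\to y_i'}\neq 0$) the vector $g$ is nonvanishing on all of $P_i$ and $h/g$ is constant there, with common value $\rho_{y_i\to y_i'}$. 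Part (i) now follows: for irreversible $y\to y'\in P_i$ one has $g_{y\to y'}=\kappa_{y\to y'}>0$ and $h_{y\to y'}=\kappa_{y\to y'}e^{T_{.y}\cdot\mu}>0$, so $C_i$ is nondegenerate, $\rho_{y_i\to y_i'}=e^{T_{.y}\cdot\mu}>0$ for every such $y\to y'$ (using that the representative is itself irreversible, so $\rho_{y_i\to y_i'}=e^{T_{.y_i}\cdot\mu}$), and taking logarithms gives $M_{y_i\to y_i'}=T_{.y}\cdot\mu$. Part (iv) is similar: for reversible $y\to y'\in P_0$, Lemma \ref{equiv_classes1}(i) forces $g_{y\to y'}=h_{y\to y'}=0$, i.e. $\kappa_{y\to y'}=\kappa_{y'\to y}$ and $\kappa_{y\to y'}e^{T_{.y}\cdot\mu}=\kappa_{y'\to y}e^{T_{.y'}\cdot\mu}$, hence $e^{T_{.y}\cdot\mu}=e^{T_{.y'}\cdot\mu}$ and $T_{.y}\cdot\mu=T_{.y'}\cdot\mu$.

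The heart of the proof is parts (ii) and (iii). For a reversible $y\to y'\in\mathscr{O}$ abbreviate $a=\kappa_{y\to y'}>0$, $b=\kappa_{y'\to y}>0$, $p=e^{T_{.y}\cdot\mu}>0$, $q=e^{T_{.y'}\cdot\mu}>0$, so $g_{y\to y'}=a-b$ and $h_{y\to y'}=ap-bq$. If $C_i$ is degenerate then $g$ vanishes on $P_i$ by fact (2), so $a=b$ and $h_{y\to y'}=a(p-q)$; since $a>0$ the sign of $h_{y\to y'}$ equals the sign of $p-q$, and taking logarithms turns the three alternatives of part (iii) into the corresponding comparisons of $T_{.y}\cdot\mu$ with $T_{.y'}\cdot\mu$. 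If $C_i$ is nondegenerate then $a\neq b$, and writing $\rho:=\rho_{y_i\to y_i'}$ the identity $ap-bq=\rho(a-b)$ rearranges to $a(p-\rho)=b(q-\rho)$; hence $p-\rho$ and $q-\rho$ have the same sign and, when $q\neq\rho$, $(p-\rho)/(q-\rho)=b/a$ (the case $q=\rho$ forces $p=\rho$ as well). The shelf of $y\to y'$ prescribes the sign of $p-\rho$ (positive, zero, negative on the upper, middle, lower shelves), the sign of $a-b$ tells us whether $b/a$ falls below or above $1$, and together these pin down the complete ordering of $\rho$, $p$, $q$; passing to logarithms — and, in the upper-shelf cases where $\rho$ may fail to be positive, using that $M_{y_i\to y_i'}$ is then taken to be an arbitrarily large negative number whereas $T_{.y}\cdot\mu$ and $T_{.y'}\cdot\mu$ are finite — reproduces precisely the six inequalities a--f (and one checks en route that in the middle- and lower-shelf cases $\rho>0$, so that there $M_{y_i\to y_i'}=\ln\rho$ genuinely).

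I expect the main obstacle to be the sign bookkeeping in part (ii) rather than any isolated hard step. Two points require care: the sign of $g$ for the reaction actually being shelved need not coincide with that of the class representative — they differ by the scalar $\alpha$ above, which is what makes the sub-cases pair up (a with d, b with e, c with f) — so one must decide at the outset whether the sign conditions are read off the shelved reaction or the representative and be consistent; and the convention setting $M_{y_i\to y_i'}$ to an arbitrarily large negative number when $\rho_{y_i\to y_i'}\le 0$ is exactly what keeps the upper-shelf sub-cases valid when the weighted mean $\rho$ is not positive. Once the single identity $a(p-\rho)=b(q-\rho)$ is available, each sub-case is a one-line deduction, so the remaining work is organized enumeration rather than computation.
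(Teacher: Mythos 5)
Your proposal is correct and follows essentially the same route as the paper's proof: the irreversible case is handled directly from positivity of $\kappa$, the reversible nondegenerate case rests on the identity $h_{y\to y'}=\rho\, g_{y\to y'}$ rearranged so that positivity of $\kappa_{y\to y'}$ and $\kappa_{y'\to y}$ forces the ordering of $\rho$, $e^{T_{.y}\cdot\mu}$, $e^{T_{.y'}\cdot\mu}$ (your $a(p-\rho)=b(q-\rho)$ is the paper's pair of formulas for $\kappa_{y\to y'}$ and $\kappa_{y'\to y}$ in disguise), and the class-invariance of $\rho$ via Lemma \ref{equiv_classes1} closes the argument. Your explicit remarks that $g,h\in Ker L_{\mathscr{O}}$ follows from the equilibrium identities and that the representative-versus-shelved-reaction sign issue is resolved by the realignment convention are points the paper leaves implicit, but they do not change the approach.
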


\begin{lemma} Let $\left(\mathscr{S},\mathscr{C},\mathscr{R},K\right)$ be a PL-RDK system and $\mathscr{O}$ be an orientation. Suppose there exist $\mu  \in {\mathbb{R}^\mathscr{S}}$, $g,h \in KerL_\mathscr{O}$, ${P_i} \ (i=1,2,...,w)$ with representative ${{y_i} \to {y_i}'}$
and $\left\{ {{\rho _{{y_i} \to {y_i}'}} = \dfrac{{{h_{{y_i} \to {y_i}'}}}}{{{g_{{y_i} \to {y_i}'}}}}|{g_{{y_i} \to {y_i}'}} \ne 0,i = 1,...,w} \right\}$ satisfy the conditions given in Lemma \ref{rev_irrev4}. Then the following holds for a nondegenerate $C_i$.
  \begin{itemize}
\item[i.] All irreversible reactions in $C_i \ (i\ge 1)$ must belong to the middle shelf.
\item[ii.] $y \to y' \in {C_i}$ must belong to the upper shelf if ${{\rho _{{y_i} \to {y_i}'}}}\le 0$.
\item[iii.] If a reaction is reversible, then the reaction and its reversible pair must belong to the same shelf.
\item[iv.] Any two reactions in $C_i$ with the same reactant complex must belong to the same shelf.
\item[v.] Each reaction whose reactant complex lies in a nonterminal strong linkage class must belong to the middle shelf.
\item[vi.] Each reaction whose reactant complex lies in a terminal strong linkage class of the fundamental subnetwork must belong to the same shelf.
\item[vii.] If for a nondegenerate $C_i \ (i\ge 1)$, $\mathscr{N}_i$ forms a big (undirected) cycle (with at least three vertices), then its reactions are all in a terminal strong linkage class and belong to the middle shelf, where $\mathscr{N}_i$ is the subnetwork generated by reactions in $P_i$.
  \end{itemize}
\label{rev_irrev2s}
\end{lemma}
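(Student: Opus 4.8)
The plan is to split the seven claims into a ``pointwise'' block, (i)--(iv), obtained from the shelving rule and Lemma~\ref{rev_irrev4} by a finite case analysis, and a ``structural'' block, (v)--(vii), where one additionally exploits the strong linkage class decomposition of the fundamental subnetwork together with $g,h\in\mathrm{Ker}\,L_\mathscr{O}$.

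I would dispatch (iv) first: the shelf of a reaction $y\to y'\in C_i$ is decided only by whether $e^{T_{.y}\cdot\mu}$ is greater than, equal to, or less than $\rho_{y_i\to y_i'}$, and two reactions of $C_i$ with the same reactant $y$ share both of these quantities, hence the same shelf. For (ii), $\rho_{y_i\to y_i'}\le 0$ forces $e^{T_{.y}\cdot\mu}>0\ge\rho_{y_i\to y_i'}$, i.e.\ the upper shelf. For (i), note that when $C_i$ contains an irreversible reaction its representative $y_i\to y_i'$ is chosen irreversible, so Lemma~\ref{rev_irrev4}(i) gives $g_{y_i\to y_i'}>0$ and $h_{y_i\to y_i'}>0$, whence $\rho_{y_i\to y_i'}>0$ and $M_{y_i\to y_i'}=\ln\rho_{y_i\to y_i'}$; applying Lemma~\ref{rev_irrev4}(i) to an arbitrary irreversible $y\to y'\in C_i$ (which then lies in $\mathscr{O}\cap P_i$) gives $M_{y_i\to y_i'}=T_{.y}\cdot\mu$, and exponentiating yields $e^{T_{.y}\cdot\mu}=\rho_{y_i\to y_i'}$, the middle shelf. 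For (iii), let $y\to y'\in\mathscr{O}\cap P_i$ be reversible, so its reverse $y'\to y\in C_i\setminus\mathscr{O}$ is shelved by comparing $e^{T_{.y'}\cdot\mu}$ with $\rho_{y_i\to y_i'}$; running over the six combinations of $\mathrm{sign}(g_{y_i\to y_i'})$ and the shelf of $y\to y'$ recorded in Lemma~\ref{rev_irrev4}(ii), and using part (ii) above to dispose of the sub-cases $\rho_{y_i\to y_i'}\le 0$, one checks that in every case $e^{T_{.y}\cdot\mu}$ and $e^{T_{.y'}\cdot\mu}$ lie on the same side of (or exactly at) $\rho_{y_i\to y_i'}$, which is precisely the assertion that a reaction and its reverse share a shelf. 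The picture to carry forward is that $y\mapsto T_{.y}\cdot\mu$ is a real ``height'' on the complexes occurring in $C_i$ and that, by Lemma~\ref{rev_irrev4}, each oriented reaction of $P_i$ either keeps its reactant at height exactly $M_{y_i\to y_i'}$ (irreversible, or reversible on the middle shelf) or strictly raises/lowers the height from reactant to product, in a direction fixed by $\mathrm{sign}(g_{y_i\to y_i'})$ and its shelf.

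For the structural block I would use that $\mathscr{N}_i$, being generated by a single $\sim$-equivalence class $P_i$, has at most a one-dimensional kernel (Lemma~\ref{equiv_classes1}, Remark~\ref{rem:equivclass}), so it is acyclic apart from at most one undirected cycle; and that when such a cycle is present and non-trivial (as in (vii), where $\mathscr{N}_i$ \emph{is} the cycle) it constitutes a terminal strong linkage class, every ``dangling'' branch being excluded because its reactions would have identically zero coordinate in $\mathrm{Ker}\,L_\mathscr{O}$ and hence lie in $P_0$, not $P_i$. Granting this: for (vi), fix a terminal strong linkage class $\Lambda$ of the fundamental subnetwork; since $\Lambda$ is terminal every reaction of $C_i$ with reactant in $\Lambda$ stays inside $\Lambda$, and since $\Lambda$ is strongly connected any two of its complexes are joined by a directed walk of reactions of $C_i$ along which (iii), (iv) and, at irreversible steps, (i) keep the shelf assignment constant, so all reactions with reactant in $\Lambda$ land on one shelf. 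For (vii), were some reaction of the cycle off the middle shelf, the corresponding strict height inequality of Lemma~\ref{rev_irrev4}(ii), propagated around the cycle via (iii)--(iv), would give $T_{.y}\cdot\mu<T_{.y}\cdot\mu$ for a complex $y$, a contradiction, so every reaction lies on the middle shelf; and these reactions lie in a single terminal strong linkage class by the structural remark above. Part (v) is the same idea applied ``locally'': a reactant complex $y$ in a \emph{nonterminal} strong linkage class has, by nonterminality, a reaction of $\mathscr{N}_i$ leaving its strong linkage class, and tracing this reaction together with the cyclic structure inside that class, and invoking (i)--(iv), pins $T_{.y}\cdot\mu=M_{y_i\to y_i'}$, i.e.\ the middle shelf.

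The main obstacle is the combinatorics of (v)--(vii) rather than anything special to power-law kinetics: once Lemma~\ref{rev_irrev4} is available, the passage from mass action kinetics to PL-RDK is merely notational, replacing the molecularity pairing $y\cdot\mu$ by the $T$-matrix pairing $T_{.y}\cdot\mu$, while the statements about orientations, equivalence classes, strong linkage classes and cycles are kinetics-independent and lift verbatim from \cite{ji}. The two genuinely delicate points are (a) tracking the orientation when a cycle or a connecting walk in $\mathscr{N}_i$ runs through some reactions ``forward'' (in $\mathscr{O}$) and others ``backward'', so that the inequalities of Lemma~\ref{rev_irrev4}(ii) compose with the right sign, and (b) establishing the structural shape of $\mathscr{N}_i$ --- acyclic away from at most one cycle, that cycle being a terminal strong linkage class --- from $g\in\mathrm{Ker}\,L_\mathscr{O}$ and the definition of the $\sim$-classes; these are exactly the places where Lemma~\ref{equiv_classes1} and Remark~\ref{rem:equivclass} do the work.
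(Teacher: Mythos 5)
Your proposal is correct and follows essentially the same route as the paper: the paper's own proof of Lemma~\ref{rev_irrev2s} is a one-line deferral to Ji's thesis, stating that one obtains an analogous argument by replacing the columns of the molecularity matrix with the columns of the $T$-matrix, which is precisely the observation you make in your closing paragraph. Your write-up simply supplies the details the paper omits --- the case analysis for (i)--(iv) from Lemma~\ref{rev_irrev4} and the height-propagation/cycle arguments for (v)--(vii) --- and these match the structure of the cited mass-action proof.
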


\begin{illustration}
\label{sacc:cer3}
Again, consider ERM0-G  in Illustration \ref{sacc:cer}. Since irreversible reactions belong to the middle shelf, we obtain the following shelving assignment.\\
${{\cal U}_1} = \left\{ {} \right\},{{\cal M}_1} = \left\{ { R_1 : X_{2} \to X_1 + X_{2},R_2 : X_{1}+X_5 \to X_2 + X_{5}    } \right\},{{\cal L}_1} = \left\{ {} \right\}$\\
${{\cal U}_2} = \left\{ {} \right\},{{\cal M}_2} = \left\{ { R_3: 2X_{5} + X_{1}  \to X_5 + X_{1}   } \right\},{{\cal L}_2} = \left\{ {} \right\}$\\
${{\cal U}_3} = \left\{ {} \right\},{{\cal M}_3} = \left\{ {  R_4 : X_2+X_5  \to X_3+X_5  } \right\},{{\cal L}_3} = \left\{ {} \right\}$\\
${{\cal U}_4} = \left\{ {} \right\},{{\cal M}_4} = \left\{ {  R_5: 2X_{5} + X_{2}  \to X_5 + X_{2} } \right\},{{\cal L}_4} = \left\{ {} \right\}$\\
${{\cal U}_5} = \left\{ {} \right\},{{\cal M}_5} = \left\{ {   R_6 : X_2+X_5  \to X_5 } \right\},{{\cal L}_5} = \left\{ {} \right\}$\\
${{\cal U}_6} = \left\{ {} \right\},{{\cal M}_6} = \left\{ {  R_7 : X_2+X_5  \to X_2 } \right\},{{\cal L}_6} = \left\{ {} \right\}$\\
${{\cal U}_7} = \left\{ {} \right\},{{\cal M}_7} = \left\{ {  R_8: X_3+X_5  \to X_4 + X_5,  R_{11}: X_3+X_4+X_5  \to X_3 + X_5} \right\},\\{{\cal L}_7} = \left\{ {} \right\}$\\
${{\cal U}_8} = \left\{ {} \right\},{{\cal M}_8} = \left\{ { R_9: X_3+X_5 \to X_3 + 2X_5   } \right\},{{\cal L}_8} = \left\{ {} \right\}$\\
${{\cal U}_9} = \left\{ {} \right\},{{\cal M}_9} = \left\{ { R_{10}: X_3+X_4+X_5  \to X_4 + X_5 } \right\},{{\cal L}_9} = \left\{ {} \right\}$\\
${{\cal U}_{10}} = \left\{ {} \right\},{{\cal M}_{10}} = \left\{ {  R_{12} : X_3+X_4+X_5 \to X_3+X_4+2X_5  } \right\},{{\cal L}_{10}} = \left\{ {} \right\}$\\
${{\cal U}_{11}} = \left\{ {} \right\},{{\cal M}_{11}} = \left\{ { R_{13} : 2X_5  \to  X_5   } \right\},{{\cal L}_{11}} = \left\{ {} \right\}$
\end{illustration}

\subsection{Sign Patterns and the Fundamental Theorem of Multistationarity in PL-RDK systems}

Suppose $g,h \in Ker{L_\mathscr{O}}$ such that $g \ne 0$ but $h = 0$. For any nonzero sign pattern which is stoichiometrically compatible with $Ker{L_\mathscr{O}}$, we have a solution of nonzero $g \in Ker{L_\mathscr{O}}$ with such sign patterns. Define $${\Gamma _W} = \left\{ {x \in {\mathbb{R}^\mathscr{O}}|x{\rm{ \ has \ support \ in \ }}W} \right\}$$ with $W = \left\{ {{y_i} \to {y_i}'|i = 1,...,w} \right\} \in \mathscr{O}$.

We now focus our attention in depicting what is meant by a ``valid'' pair of sign patterns for ${g_W}=g|_W,{h_W}=h|_W \in  {\mathbb{R}^\mathscr{O}} \cap \Gamma _W$. A pair of sign patterns for ${g_W},{h_W} \in  {\mathbb{R}^\mathscr{O}} \cap \Gamma _W$ is said to be compatible with $Ker{L_\mathscr{O}}{|_W}$ if both of these statements hold.
\begin{itemize}
\item[i.] If ${C_i}\left( {i = 1,2,...,w} \right)$ is nonreversible then the signs of ${g_W}\left( {y \to y'} \right)$ and ${h_W}\left( {y \to y'} \right)$ are both positive.
\item[ii.] For every reaction ${y \to y'}$ in $W$, $v_{{y_i} \to {y_i}'}^1$ and ${g_W}\left( {y \to y'} \right)$ have the same sign, and $v_{{y_i} \to {y_i}'}^2$ and ${h_W}\left( {y \to y'} \right)$ have the same sign, for some ${v^1},{v^2} \in Ker{L_\mathscr{O}}$.
\end{itemize}
In other words, a pair of sign patterns for ${g_W}$ and ${h_W}$ are said to be valid, if it is nonzero and stoichiometrically compatible with $Ker{L_\mathscr{O}}{|_W}$.

At this point, we assume that there is a sign pattern for ${g_W} \in {\mathbb{R}^\mathscr{O}} \cap {\Gamma _W}$. Define the sets
\[D = \left\{ {{y_i} \to {y_i}' \in W|{g_W}\left( {{y_i} \to {y_i}'} \right) = 0} \right\}\]
and
\[ND = \left\{ {{y_i} \to {y_i}' \in W|{g_W}\left( {{y_i} \to {y_i}'} \right) \ne 0} \right\}.\]

Let $\{b^j\}_{j=1}^q$ be a basis for $Ker^\perp {L_\mathscr{O}}\cap {\Gamma _W}$, if it exists, where $q$ is the dimension of $Ker^\perp {L_\mathscr{O}}\cap {\Gamma _W}$. For $j = 1,2,...,q$, we also define the following sets and consider the following equations:
$$
\begin{aligned}
R_ + ^j =& \left\{ {{y_i} \to {y_i}' \in ND|b_{{y_i} \to {y_i}'}^j{g_W}\left( {{y_i} \to {y_i}'} \right) > 0} \right\}\\
R_ - ^j =& \left\{ {{y_i} \to {y_i}' \in ND|b_{{y_i} \to {y_i}'}^j{g_W}\left( {{y_i} \to {y_i}'} \right) < 0} \right\}\\
Q_ + ^j =& \left\{ {{y_i} \to {y_i}' \in D|b_{{y_i} \to {y_i}'}^j{g_W}\left( {{y_i} \to {y_i}'} \right) > 0} \right\}\\
Q_ - ^j =& \left\{ {{y_i} \to {y_i}' \in D|b_{{y_i} \to {y_i}'}^j{g_W}\left( {{y_i} \to {y_i}'} \right) < 0} \right\}\\
Q_1^j =& \left\{ {{\rho _W}\left( {{y_i} \to {y_i}'} \right)|{y_i} \to {y_i}' \in R_ + ^j} \right\}\\
Q_2^j =& \left\{ {{\rho _W}\left( {{y_i} \to {y_i}'} \right)|{y_i} \to {y_i}' \in R_ - ^j} \right\}
\end{aligned}
$$
\begin{equation}
\label{eq:Ker3}
\sum\limits_{{y_i} \to {y_i}' \in ND} {b_{{y_i} \to {y_i}'}^j{g_W}\left( {{y_i} \to {y_i}'} \right) = 0},\qquad j=1,2,...,q
\end{equation}
\begin{equation}
\label{eq:Ker4}
\begin{aligned}
\sum\limits_{{y_i} \to {y_i}' \in ND} {{\rho _W}\left( {{y_i} \to {y_i}'} \right)b_{{y_i} \to {y_i}'}^j{g_W}\left( {{y_i} \to {y_i}'} \right)} \\
+\sum\limits_{{y_i} \to {y_i}' \in D} b_{{y_i} \to {y_i}'}^j{h_W}\left( {{y_i} \to {y_i}'} \right) = 0, \qquad j=1,2,...,q
\end{aligned}
\end{equation}
\begin{equation}
\label{eq:Ker5}
{h_W}\left( {{y_i} \to {y_i}'} \right) = {\rho _W}\left( {{y_i} \to {y_i}'} \right){g_W}\left( {{y_i} \to {y_i}'} \right),\qquad {y_i} \to {y_i}' \in ND.
\end{equation}

\begin{lemma} {\cite{ji}} 
Suppose a reaction network satisfies the following properties for an orientation $\mathscr{O}$: $P_i$ ($i=0,1,2,...,w$) is defined by a representative ${{y_i} \to {y_i}'}$, $W=\{{{y_i} \to {y_i}'}|i=1,2,...,w\}\subseteq \mathscr{O}$, a given basis $\left\{ {{b^j}} \right\}_{j = 1}^q$ for $Ker^\perp {L_\mathscr{O}}\cap {\Gamma _W}$, and a set of parameters\\$\left\{ {{\rho _W}\left( {{y_i} \to {y_i}'} \right)|{g_W}\left( {{y_i} \to {y_i}'} \right) \ne 0,i = 1,2,...,w} \right\}$ where the sign of ${{\rho _W}\left( {{y_i} \to {y_i}'} \right)}$ is the same as the ratio of the signs of ${{h_W}\left( {{y_i} \to {y_i}'} \right)}$ and ${{g_W}\left( {{y_i} \to {y_i}'} \right)}$.
Further, suppose there exist ${h_W},{g_W} \in {\mathbb{R}^\mathscr{O}} \cap {\Gamma _W}$ with a valid pair of sign patterns such that Equations (\ref{eq:Ker3}), (\ref{eq:Ker4}), and (\ref{eq:Ker5}) are satisfied. Then the following holds for $j=1,2,...,q$:
\begin{itemize}
\item[i.] If $\sum\limits_{{y_i} \to {y_i}' \in D} {b_{{y_i} \to {y_i}'}^j{h_W}\left( {{y_i} \to {y_i}'} \right) > 0} $, then one element in $Q_2^j$ is strictly greater than one element in $Q_1^j$.
\item[ii.] If $\sum\limits_{{y_i} \to {y_i}' \in D} {b_{{y_i} \to {y_i}'}^j{h_W}\left( {{y_i} \to {y_i}'} \right) < 0} $, then one element in $Q_1^j$ is strictly greater than one element in $Q_2^j$.
\item[iii.] If $\sum\limits_{{y_i} \to {y_i}' \in D} {b_{{y_i} \to {y_i}'}^j{h_W}\left( {{y_i} \to {y_i}'} \right) = 0} $, then $Q_1^j$ and $Q_2^j$ are nonsegregated. That is, at least one of these holds:
\begin{itemize}
\item[a.] There exist an element $a$ from one of multisets $Q_1^j$ and $Q_2^j$, and $b<c$ from the other such that a is between $b$ and $c$, i.e., $b<a<c$.
\item[b.] All the elements in the multisets $Q_1^j$ and $Q_2^j$ are equal or there are $a,b \in Q_1^j$ and $c,d \in Q_2^j$ where $c = a < b = d$.
\end{itemize}
\end{itemize}
\label{thm_multisets}
\end{lemma}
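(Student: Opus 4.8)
\emph{Proof strategy.} The plan is to fix an index $j\in\{1,2,\ldots,q\}$ and to collapse the three equations into a single one-dimensional comparison of weighted averages. First I would abbreviate, for $y_i\to y_i'\in ND$, $a_{y_i\to y_i'}:=b^j_{y_i\to y_i'}\,g_W(y_i\to y_i')$ and $\rho_{y_i\to y_i'}:=\rho_W(y_i\to y_i')$, and write $S:=\sum_{y_i\to y_i'\in D}b^j_{y_i\to y_i'}\,h_W(y_i\to y_i')$ for the quantity whose sign separates the three cases. Equation~(\ref{eq:Ker5}) gives $\rho_{y_i\to y_i'}a_{y_i\to y_i'}=b^j_{y_i\to y_i'}\,h_W(y_i\to y_i')$ on $ND$, so Equation~(\ref{eq:Ker4}) becomes $\sum_{ND}\rho_{y_i\to y_i'}a_{y_i\to y_i'}=-S$, while Equation~(\ref{eq:Ker3}) reads $\sum_{ND}a_{y_i\to y_i'}=0$. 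By construction $a_{y_i\to y_i'}>0$ exactly on $R_+^j$ and $a_{y_i\to y_i'}<0$ exactly on $R_-^j$ (reactions in $ND$ with $b^j_{y_i\to y_i'}=0$ contribute nothing to either sum), so with $P:=\sum_{R_+^j}a_{y_i\to y_i'}$ one has $P=-\sum_{R_-^j}a_{y_i\to y_i'}\ge 0$. If $P=0$ both $R_+^j$ and $R_-^j$ are empty, hence $S=0$ and $Q_1^j=Q_2^j=\emptyset$, so only conclusion iii applies and it holds vacuously via subcase (b).

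Next, assuming $P>0$, I would set $\bar\rho_1:=P^{-1}\sum_{R_+^j}\rho_{y_i\to y_i'}a_{y_i\to y_i'}$ and $\bar\rho_2:=P^{-1}\sum_{R_-^j}\rho_{y_i\to y_i'}(-a_{y_i\to y_i'})$. Since the weights $a_{y_i\to y_i'}$ on $R_+^j$ and $-a_{y_i\to y_i'}$ on $R_-^j$ are positive and each family of weights sums to $P$, these are convex combinations of the entries of the multisets $Q_1^j$ and $Q_2^j$, hence $\min Q_1^j\le\bar\rho_1\le\max Q_1^j$ and $\min Q_2^j\le\bar\rho_2\le\max Q_2^j$; moreover, splitting $\sum_{ND}\rho_{y_i\to y_i'}a_{y_i\to y_i'}=-S$ into its $R_+^j$- and $R_-^j$-parts yields the identity $\bar\rho_1-\bar\rho_2=-S/P$. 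For conclusion i, $S>0$ gives $\bar\rho_1<\bar\rho_2$, so $\min Q_1^j\le\bar\rho_1<\bar\rho_2\le\max Q_2^j$ and $\max Q_2^j$ is an element of $Q_2^j$ strictly larger than the element $\min Q_1^j$ of $Q_1^j$; conclusion ii ($S<0$) is the mirror image.

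The remaining, more delicate case is iii, where $S=0$ forces $\bar\rho_1=\bar\rho_2=:\bar\rho$. If $Q_1^j$ and $Q_2^j$ are both constant then every entry equals $\bar\rho$, so all entries of $Q_1^j\cup Q_2^j$ coincide and subcase (b) applies. Otherwise at least one of them, say $Q_1^j$ after relabelling, is non-constant, and since its weighted average is $\bar\rho$ it has entries on both sides, say $p<\bar\rho<p'$ with $p,p'\in Q_1^j$; because $\bar\rho_2=\bar\rho$ is a convex combination of the entries of $Q_2^j$, those entries cannot all lie in $(-\infty,p]$, nor can they all lie in $[p',\infty)$. I would then distinguish cases according to whether $Q_2^j$ meets the open interval $(p,p')$, has an entry strictly below $p$, has an entry strictly above $p'$, or is confined to $\{p,p'\}$: in each of the first three one directly exhibits the betweenness configuration $b<a<c$ of subcase (a), taking $a$ from one multiset and $b<c$ from the other and using $p<p'$ to sandwich $p$ or $p'$ strictly between two entries drawn from $Q_2^j$; in the last case $Q_2^j$ must contain both $p$ and $p'$, and since $Q_1^j$ does as well, one obtains the configuration $c=a<b=d$ of subcase (b). I expect this enumeration of the boundary-equality subcases to be the main obstacle; everything else is the linear-algebra reduction above together with the elementary fact that a convex combination of real numbers lies between the least and greatest of them.
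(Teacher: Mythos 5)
Your proof is correct. The paper itself gives no proof of this lemma --- it is imported verbatim from Ji's thesis \cite{ji} and is absent from the paper's appendix of proofs --- so there is nothing internal to compare against; your reduction of Equations (\ref{eq:Ker3})--(\ref{eq:Ker5}) to the single identity $\bar\rho_1-\bar\rho_2=-S/P$ between the two weighted averages, the convex-combination sandwich $\min Q_1^j\le\bar\rho_1<\bar\rho_2\le\max Q_2^j$ for cases i and ii, and the exhaustive four-way boundary analysis for case iii (together with the degenerate case $P=0$, where $R_+^j=R_-^j=\emptyset$ forces $S=0$ and conclusion iii holds vacuously) is the standard argument for results of this type, and I found no gaps in it.
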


\begin{definition}
Let $\left(\mathscr{S},\mathscr{C},\mathscr{R}\right)$ be a chemical reaction network with an orientation $\mathscr{O}$. The chemical reaction network is said to have a forestal property if for given $W \subseteq \mathscr{O}$, $Ker^{\perp}{L_\mathscr{O}} \cap {\Gamma _W}$ has a forest basis. $Ker^{\perp}{L_\mathscr{O}} \cap {\Gamma _W}$ has a forest basis if it has a basis such that the graph based on the basis vectors is a forest graph. (The reader may refer to \cite{ji} for further details on forest graphs.)
\end{definition}

The following theorem is an extension of Theorem 2.11.9 in {\cite{ji}} to PL-RDK system which we shall call the ``Fundamental Theorem of Multistationarity in PL-RDK systems''.

\begin{theorem} 
Suppose the reaction network $\left(\mathscr{S},\mathscr{C},\mathscr{R}\right)$ has a forestal property and $\left(\mathscr{S},\mathscr{C},\mathscr{R},K\right)$ is a PL-RDK system. It has the capacity to admit multiple steady states if and only if the following statements hold:
\begin{itemize}
\item[i.] $0 \ne \mu  \in {\mathbb{R}^\mathscr{S}}$ exists which is stoichiometrically compatible with $S$,
\item[ii.] a valid sign pattern for ${g_W},{h_W} \in \mathbb{R}^\mathscr{O} \cap{\Gamma _W}$ exists,
\item[iii.] a set of parameters $\left\{ {{\rho _W}\left( {{y_i} \to {y_i}'} \right)|{g_W}\left( {{y_i} \to {y_i}'} \right) \ne 0} \right\}$ where the sign of ${\rho _W}\left( {{y_i} \to {y_i}'} \right)$ is precisely the ratio of the signs of ${{h_W}\left( {{y_i} \to {y_i}'} \right)}$ and\\${{g_W}\left( {{y_i} \to {y_i}'} \right)}$ that satisfies the conditions in Lemma \ref{thm_multisets}, and
\item[iv.] a shelving assignment exists for each nondegenerate fundamental class that satisfies the conditions in Lemma \ref{rev_irrev2s}
\end{itemize}
which together satisfy the conditions in Lemma \ref{rev_irrev4} in terms of ${{g_W}\left( {{y_i} \to {y_i}'} \right)}$, ${{h_W}\left( {{y_i} \to {y_i}'} \right)}$, and ${{\rho_W}\left( {{y_i} \to {y_i}'} \right)}$.
\label{main_thm}
\end{theorem}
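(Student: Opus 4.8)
The plan is to mirror Ji's proof of Theorem 2.11.9 in \cite{ji}, exploiting the fact that every manipulation there involving the molecularity vector $y$ in an exponent of $c^*$ or $c^{**}$ transcribes to the PL-RDK setting once $y$ is replaced by the column $T_{.y}$ of the $T$-matrix; this is exactly the content of the identity $c^{*T_{.y}} = e^{T_{.y}\cdot\mu}\,c^{**T_{.y}}$ derived earlier in Section~\ref{sect:RDK}. Since the statement is an equivalence, I would prove the two implications separately, in each case reducing to the PL-RDK lemmas already established above.

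\emph{Necessity.} Suppose the PL-RDK system admits two distinct, positive, stoichiometrically compatible equilibria $c^*,c^{**}$ for some set of positive rate constants. Lemma~\ref{converse1} then produces $\kappa\in\mathbb{R}_{>0}^{\mathscr{R}}$ and a nonzero $\mu\in\mathbb{R}^{\mathscr{S}}$ stoichiometrically compatible with $S$, which is condition~(i), and it recasts the steady-state conditions as the two sums $\sum_{y\to y'\in\mathscr{R}}\kappa_{y\to y'}(y'-y)=0$ and $\sum_{y\to y'\in\mathscr{R}}\kappa_{y\to y'}e^{T_{.y}\cdot\mu}(y'-y)=0$. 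Passing to the orientation $\mathscr{O}$, I would define $g,h\in Ker L_{\mathscr{O}}$ by the reversible/irreversible case split of Lemma~\ref{rev_irrev4}, restrict them to $W$, and check that the resulting pair $(g_W,h_W)$ is nonzero and compatible with $Ker L_{\mathscr{O}}|_W$, hence valid, which is condition~(ii). Setting $\rho_W(y_i\to y_i')=h_W(y_i\to y_i')/g_W(y_i\to y_i')$ on $ND$ gives condition~(iii), and that these parameters obey the multiset and nonsegregation conclusions is precisely Lemma~\ref{thm_multisets} applied to the forest basis $\{b^j\}$ supplied by the forestal property. Finally, shelving each reaction in a nondegenerate class according to whether $e^{T_{.y}\cdot\mu}$ exceeds, equals, or falls below $\rho_{y_i\to y_i'}$ yields an assignment satisfying Lemma~\ref{rev_irrev2s}, which is condition~(iv); the equalities and strict inequalities among $M_{y_i\to y_i'}$, $T_{.y}\cdot\mu$ and $T_{.y'}\cdot\mu$ recorded in Lemma~\ref{rev_irrev4} then hold by construction.

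\emph{Sufficiency.} Here I would run the argument in reverse. Given $\mu$, a valid pair $(g_W,h_W)$, compatible parameters $\rho_W$, and a shelving as in condition~(iv), Lemma~\ref{rev_irrev4} together with the shelving pins down the real numbers $T_{.y}\cdot\mu$, hence the positive quantities $e^{T_{.y}\cdot\mu}$, for every reactant complex occurring in $W$; the forestal hypothesis is what makes the accompanying system of equalities and strict inequalities coming from Lemmas~\ref{rev_irrev4} and~\ref{rev_irrev2s} consistent, since a forest basis of $Ker^{\perp}L_{\mathscr{O}}\cap\Gamma_W$ allows $(g_W,h_W)$ to be extended off $W$ to a genuine pair $g,h\in Ker L_{\mathscr{O}}$ without introducing sign conflicts, Lemma~\ref{thm_multisets} being exactly what guarantees such an extension exists. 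Recovering $\kappa\in\mathbb{R}_{>0}^{\mathscr{R}}$ from $g$ and $h$ by inverting the defining case split (the shelf relations of Lemma~\ref{rev_irrev4} being precisely what forces the recovered constants to be positive), one verifies that $\sum_{y\to y'}\kappa_{y\to y'}(y'-y)=0$ and $\sum_{y\to y'}\kappa_{y\to y'}e^{T_{.y}\cdot\mu}(y'-y)=0$. Lemma~\ref{converse2} then delivers positive rate constants and two distinct, positive, stoichiometrically compatible equilibria, that is, the capacity for multistationarity.

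\emph{Main obstacle.} The delicate point, and the only step that is not a direct appeal to the lemmas above, is the bookkeeping in the sufficiency direction: showing that the shelving data, the valid sign pattern, and the parameters $\rho_W$ can be simultaneously realized by a single honest $\mu\in\mathbb{R}^{\mathscr{S}}$ and honest $g,h\in Ker L_{\mathscr{O}}$ on all of $\mathscr{O}$, not merely on $W$. This is where the forestal property is indispensable, and where the combinatorial core of \cite{ji} must be reproduced with $T_{.y}$ in place of $y$: partitioning $\mathscr{O}$ into the equivalence classes $P_i$, propagating shelf assignments within each fundamental class by Lemma~\ref{rev_irrev2s}, and disposing of the degenerate classes and of $P_0$ through parts~(iii) and~(iv) of Lemma~\ref{rev_irrev4}. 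Everything else either cites one of the PL-RDK lemmas above or is the routine observation that the substitution $y\mapsto T_{.y}$ in the exponents leaves all of Ji's linear-algebraic steps intact.
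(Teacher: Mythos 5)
Your proposal is correct and follows essentially the same route as the paper's own proof: Lemma~\ref{converse1} for necessity of (i), the case-split definitions of $g,h\in Ker L_{\mathscr{O}}$ leading to Equations~(\ref{eq:Ker3})--(\ref{eq:Ker5}) and hence (ii)--(iv), and Lemma~\ref{converse2} for sufficiency once $\kappa$ is recovered, with the forest basis supplying the converse of Lemma~\ref{thm_multisets} (the paper isolates this as a separate biconditional lemma in the appendix). Your write-up is in fact more explicit than the paper's rather terse argument about where the forestal property and the extension of $(g_W,h_W)$ off $W$ are used.
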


\section{Applications of the MSA to GMA systems}
\label{applications:gma}
This section emphasizes how powerful the MSA is, as we present a solution to the problem of monostationarity, for any set of rate constants, of the model of anaerobic fermentation pathway in yeast, and the multistationarity, for particular set of rate constants, of the global carbon cycle model of Heck et al. which are not yet known in literature. Using the algorithm, we can determine whether a PL-RDK, with underlying network of any deficiency, has the capacity to admit multiple steady states.

\subsection{Application to the Model of Anaerobic Fermentation Pathway in Yeast}
\label{shorten:yeast}
The fermentation pathway in a species of yeast, known as {\it Saccharomyces cerevisiae}, has been studied extensively. In particular, Galazzo and Bailey \cite{galazzo,galazzo2} established the experimental basis, and they were able to provide kinetic equations which was used by Curto, Cascante and Sorribas \cite{curto} to derive GMA (and S-system) models and performed standard procedures of biochemical systems analysis. 
It was used as a case study in Chapter 8 of \cite{voit} which demonstrates different numerical analyses for biochemical modeling but was not focused on the algebraic aspects of steady-state analyses.

The fermentation pathway is a popular example to be found in numerous BST papers, yet the basic question of its monostationarity or multistationarity is unknown. Different experimental set-ups and conditions under which yeast cells produce ethanol have been investigated which is of high importance for industrial purposes. The pathway in Figure \ref{yeast} describes how yeast can use glucose to produce ethanol, and also glycerol, glycogen and trehalose \cite{voit}.
\begin{table}
\caption{Network Numbers}
\label{network:numbers1}       % Give a unique label
% For LaTeX tables use
\begin{tabular}{lrr}
\noalign{\smallskip}\hline\noalign{\smallskip}
& ERM0-G  & Global Carbon\\
&   & Cycle Model \\
\noalign{\smallskip}\hline\noalign{\smallskip}
species & 5 & 5\\
complexes & 13 & 14\\
reactant complexes & 8 & 9\\
reactions & 13 & 10\\
irreversible reactions & 13 & 6\\
linkage classes & 1 & 6\\
strong linkage classes & 13 & 12\\
terminal sl classes & 5 & 6\\
rank of network & 5 & 4\\
deficiency & 7 & 4\\
\noalign{\smallskip}\hline\noalign{\smallskip}
\end{tabular}
\end{table}
We derive a novel result about the GMA model of anaerobic fermentation pathway of yeast: its monostationarity, i.e., it has at most one steady state for any set of rate constants. The reaction network is given in Illustration \ref{sacc:cer} and its network numbers is provided in Table \ref{network:numbers1} \cite{arceo}. The $T$-matrix is given below.\\
{\small
$\bordermatrix{%
& X_2 & X_1 + X_5 & 2X_5+X_1 & X_2+X_5 & 2X_5+X_2 &  X_3 + X_5 & X_3 +X_4 + X_5 & 2X_5\cr
X_1 & 0 & 0.7464 & 0.7464  & 0 & 0 & 0& 0 & 0\cr
X_2 & -0.2344 & 0 & 0  & 8.6107 & 0.7318 & 0& 0 & 0\cr
X_3 & 0 & 0 & 0  & 0 & 0 & 0.6159& 0.05 & 0\cr
X_4 & 0 & 0 & 0  & 0 & 0 & 0& 0.533 & 0\cr
X_5 & 0 & 0.0243 & 0.0243  & 0 & -0.3941 & 0.1308& -0.0822 & 1\cr
}$
}

{\bf STEP 1:} CHOOSING AN ORIENTATION

In this step, we refer to Illustration \ref{sacc:cer}.

{\bf STEP 2:} FINDING EQUIVALENCE CLASSES AND FUNDAMENTAL CLASSES

In this step, we refer to Illustration \ref{sacc:cer2}. If one of these statements (a) and (b) is not satisfied, then the system does not have the capacity to admit multiple equilibria, and we exit the algorithm.
\begin{itemize}
\item[(a)] All reactions in $P_0$ are reversible (with respect to $\mathscr{R}$).
\item[(b)] For two irreversible reactions (with respect to $\mathscr{R}$), $y \to y'$ and ${\overline y  \to \overline y '}$ in the same $P_i$, there exists $\alpha > 0$ such that $v_{y \to y'}^l = \alpha v_{\overline y  \to \overline y '}^l$ for all $1 \le l \le d$.
\end{itemize}

{\bf STEP 3:} FINDING THE COLINKAGE SETS 

In this step, we divide the reaction network into subnetworks in such a way that all reactions belonging from the same fundamental class are in the same subnetwork. We again refer to Table \ref{tab:PiCiEMR0G}.

{\bf STEP 4:} PICKING $W \subseteq {\mathscr{O}}$

Recall that an equivalence class is reversible if all of its reactions are reversible with respect to the original network $\mathscr{R}$. However, it is nonreversible, if it contains an irreversible reaction.
We pick a representative reaction for each of the $P_i$'s such that if a class $P_i$ is nonreversible, we pick an irreversible reaction. Otherwise, we pick any reversible reaction.
The collection of all the representatives from $P_i$ where $i=1,2,...,w$ is the set $W$ with $w$ as its number of elements.

In our example, since there are 11 equivalence classes, $w=11$. For $P_{1}$, we choose $R_{1}$. Similarly, we choose $R_{8}$ for $P_{7}$. Since the rest of each $P_i$ has only one element, we have no choice but to choose these reactions as representatives. The $i^{\rm th}$ reaction in $W$ is identified as ${y_i} \to {y_i}'$. For instance, $R_{8}$ will be identified as $y_{7} \to y_{7}'$.

{\bf STEP 5:} REALIGNING THE ORIENTATION (if needed)

For each nonzeroth equivalence class $P_i$ with $1 \le i \le w$, for any reaction ${y \to y'}$ in $P_i$, there exists an $\alpha_{y \to y'} >0$ such that $v_{y_i \to y'_i}^l = \alpha_{y \to y'} v_{y  \to  y '}^l$ for all the basis elements $v_1, v_2,...,v_d$ (for $KerL_{\mathscr{O}}$). If this statement does not hold, we then realign the orientation (or choose another orientation) until it is already satisfied. In the given example, the statement is satisfied so we go to the next step.

{\bf STEP 6:} FINDING A BASIS FOR $Ker^{\perp}L_{\mathscr{O}}\cap \Gamma_W$

To simplify this step, from STEP 2, we just consider the rows of the reactions in $W$. Moreover, a basis for $Ker^{\perp}L_{\mathscr{O}}\cap \Gamma_W$ is also given below.
$\bordermatrix{%
& v_1' & v_2' & v_3' & v_4' & v_5' &  v_6' & v_7' & v_8'\cr
w=1 & 0 & 1 & 0  & 0 & 1 & 1& 0 & 0\cr
w=2 & -1 & 0 & -1  & 1 & 0 & 0& 1 & -1\cr
w=3 & 0 & 0 & 0  & 0 & 1 & 1& 0 & 0\cr
w=4 & 1 & 0 & 0  & 0 & 0 & 0& 0 & 0\cr
w=5 & 0 & 1 & 0  & 0 & 0 & 0& 0 & 0\cr
w=6 & 0 & 0 & 1  & 0 & 0 & 0& 0 & 0\cr
w=7 & 0 & 0 & 0  & 0 & 0 & 1& 0 & 0\cr
w=8 & 0 & 0 & 0  & 1 & 0 & 0& 0 & 0\cr
w=9 & 0 & 0 & 0  & 0 & 1 & 0& 0 & 0\cr
w=10 & 0 & 0 & 0  & 0 & 0 & 0& 1 & 0\cr
w=11 & 0 & 0 & 0  & 0 & 0 & 0& 0 & 1\cr
}$
~$\bordermatrix{%
& a_1 & a_2 & a_3 \cr
w=1 & -1 & 0 & 0  \cr
w=2 & 0 & 0 & 1  \cr
w=3 & 1 & -1 & 0  \cr
w=4 & 0 & 0 & 1  \cr
w=5 & 1 & 0 & 0  \cr
w=6 & 0 & 0 & 1  \cr
w=7 & 0 & 1 & 0  \cr
w=8 & 0 & 0 & -1  \cr
w=9 & 0 & 1 & 0  \cr
w={10} & 0 & 0 & -1  \cr
w={11} & 0 & 0 & 1  \cr
}$

{\bf STEP 7:} CHECKING THE LINEARITY OF THE SYSTEM OF INEQUALITIES

If there exists a forest basis for $Ker^{\perp}L_{\mathscr{O}}\cap \Gamma_W$, then the resulting inequality system is linear. If it does not exist, we may need additional nonlinear equations on the $M_i$'s  to determine if the kinetic system has the capacity to admit multiple equilibria.

{\bf STEP 8:} CHOOSING SIGNS FOR ${g_W},{h_W} \in {\mathbb{R}}^{\mathscr{O}} \cap {\Gamma _W}$

Since each $P_i$ is nonreversible, the sign patterns for ${g_W}$ and ${h_W}$ must be positive.

{\bf STEP 9:} SHELVING REACTIONS IN THE NONDEGENERATE $C_i$'s

We refer to Illustration \ref{sacc:cer3}.

{\bf STEP 10:} SHELVING EQUALITIES AND INEQUALITIES FROM THE NONDEGENERATE $C_i$'s

We refer to Lemma \ref{rev_irrev4}. For each nondegenerate $C_i$, if ${{\rho _W}\left( {{y_i} \to {y_i}'} \right)}>0$, let ${M_i} = \ln \left( {{\rho _W}\left( {{y_i} \to {y_i}'} \right)} \right)$. Otherwise, let ${M_i}$ to be an arbitrary large and negative number. Suppose $C_i$ is a nondegenerate fundamental class. If $y \to y' \in {C_i}$ is on the middle shelf, then $T_{.y} \cdot \mu  = {M_i}$ is added to the system. If the given reaction is on the upper shelf, then $T_{.y} \cdot \mu  > {M_i}$ is added to the system. If the given reaction is on the lower shelf, then $T_{.y} \cdot \mu  < {M_i}$ is added to the system. Now, from a reversible reaction $y \to y'$ of $P_0$, it is automatic that $T_{.y} \cdot \mu  = T_{.y'} \cdot \mu $ should be added to the system.\\
$-0.2344{\mu _{X_2}}=M_1=M_2=0.7464{\mu _{X_1}}+0.0243{\mu _{X_5}}$\\
$0.7318{\mu _{X_2}}-0.3941{\mu _{X_5}}=M_3=M_4$\\
$8.6107{\mu _{X_2}}=M_5=M_6$\\
$0.6159{\mu _{X_3}}+0.1308{\mu _{X_5}}=M_7=M_8=M_9=M_{10}=0.05{\mu _{X_3}}+0.533{\mu _{X_4}}-0.0822{\mu _{X_5}}$\\
${\mu _{X_5}}=M_{11}$

{\bf STEP 11:} UPPER AND LOWER SHELVING INEQUALITIES FROM  $P_i$'s WITH NONDEGENERATE $C_i$'s

Suppose $C_i$ is nondegenerate and let $y \to y'$ in $P_i$. If ${g_W}\left( {y_i \to y_i'} \right) > 0$ and ${y \to y'}$ is on the upper shelf, or if ${g_W}\left( {y_i \to y_i'} \right) < 0$ and ${y \to y'}$ is on the lower shelf, then we add $T_{.y} \cdot \mu  < T_{.y'} \cdot \mu $ to the system. However, if ${g_W}\left( {y_i \to y_i'} \right) > 0$ and ${y \to y'}$ is on the lower shelf, or if ${g_W}\left( {y_i \to y_i'} \right) < 0$ and ${y \to y'}$ is on the upper shelf, then we add $T_{.y} \cdot \mu  > T_{.y'} \cdot \mu $ to the system.

We skip this step since the upper and the lower shelves are both empty.

{\bf STEP 12:} ADDING EQUALITIES AND INEQUALITIES FROM $P_i$'s WITH DEGENERATE $C_i$'s

Suppose $C_i$ is degenerate and let $y \to y'$ in $P_i$. If ${h_W}\left( {y_i \to y_i'} \right) > 0$ then we add $T_{.y} \cdot \mu  > T_{.y'} \cdot \mu $ to the system. However, if ${h_W}\left( {y_i \to y_i'} \right) = 0$ then we add $T_{.y} \cdot \mu  = T_{.y'} \cdot \mu $ to the system. Otherwise,  we add $T_{.y} \cdot \mu  < T_{.y'} \cdot \mu $ to the system.

Our example has no degenerate $C_i$.

{\bf STEP 13:} ADDING $M$ EQUALITIES AND INEQUALITIES

From STEP 6, we obtained a basis for $Ker^{\perp}L_{\mathscr{O}}\cap \Gamma_W$. In particular, $b^1=(-1 \ 0 \ 1 \ 0 \ 1 \ 0 \ 0 \ 0 \ 0 \ 0 \ 0)$, $b^2=(0 \ 0 \ -1 \ 0 \ 0 \ 0 \ 1 \ 0 \ 1 \ 0 \ 0)$, and $b^3=(0 \ 1 \ 0 \ 1 \ 0 \ 1 \ 0 \ -1 \ 0 \ -1 \ 1)$. From $b^1$, we have $R^1_+=\{y_3\to y_3',y_5\to y_5'\}$ and $R^1_-=\{y_1\to y_1'\}$. From $b^2$, we have $R^2_+=\{y_7\to y_7',y_9\to y_9'\}$ and $R^2_-=\{y_3\to y_3'\}$. From $b^3$, we have $R^3_+=\{y_2\to y_2',y_4\to y_4',y_6\to y_6',y_{11}\to y_{11}'\}$ and $R^3_-=\{y_8\to y_8',y_{10}\to y_{10}'\}$.

Since our example has no degenerate $C_i$, we only consider the ``nonsegregated case". We take the required sum as zero if the degenerate set is empty. Recall that two multisets $Q_1$ and $Q_2$ are nonsegregated if at least one of the following two cases holds:
\begin{itemize}
\item[i.] There exist $a$ from $Q_1$ or $Q_2$, and $b<c$ from the other such that $b<a<c$.
\item[ii.] All elements in the two multisets are equal, or there exist $a,b\in Q_1$ and $c,d\in Q_2$ such that $c=a<b=d$.
\end{itemize}

From STEP 10, we already have $M_1=M_2$, $M_3=M_4$, $M_5=M_6$, and $M_7=M_8=M_9=M_{10}$. From $b^2$, we obtain $M_7 = M_3 = M_9$.\\
CASE 1: From $b^1$, we set $M_3 < M_1 < M_5$. Hence, from $b^3$, it can only be $M_2 > M_8 > M_{11}$ or $M_6 > M_8 > M_{11}$.\\
CASE 2: From $b^1$, we set $M_3 > M_1 > M_5$. Thus, from $b^3$, it can only be $M_2 < M_8 < M_{11}$ or $M_6 < M_8 < M_{11}$.\\
CASE 3: From $b^1$, we set $M_3 = M_1 = M_5$. Finally, from $b^3$, $M_i=M_j$ for all $i,j\in\{1,2,...,11\}$.

{\bf STEP 14:} CHECKING FOR SOLUTION OF THE SYSTEM

If the system of equations and inequalities is linear, then the system obtained from STEP 10 to STEP 13 is complete. If it has a solution, then it is called a signature. However, if the system is nonlinear, then additional nonlinear constraints are needed to make it complete. If a partial linear system we obtained from STEP 10 to STEP 13 has a solution, then it is called a pre-signature. If none of such inequality systems has a solution, then additional nonlinear constraints are not needed, and we conclude that the system does not have the capacity to admit multiple equilibria.

This step checks when one obtains a $\mu$ which is sign-compatible with the stoichiometric subspace $S$. In other words, we check if there is a linear combination of a basis for $S$ that has the same sign with $\mu$.

At this point, we have the following system.\\
$-0.2344{\mu _{X_2}}=M_1=0.7464{\mu _{X_1}}+0.0243{\mu _{X_5}}$\\
$0.7318{\mu _{X_2}}-0.3941{\mu _{X_5}}=M_7$\\
$8.6107{\mu _{X_2}}=M_5$\\
$0.6159{\mu _{X_3}}+0.1308{\mu _{X_5}}=M_7=0.05{\mu _{X_3}}+0.533{\mu _{X_4}}-0.0822{\mu _{X_5}}$\\
${\mu _{X_5}}=M_{11}$\\
%$M_i=M_j$ for all $i,j\in\{1,2,...,11\}$\\
Ignoring the last equation, we obtain the following values.\\
$\mu_{X_1}=   (3442493125M_1)/2154694458 + (101250M_7)/1225651$\\
$\mu_{X_2}=   -(1250M_1)/293$\\
$\mu_{X_3}=    (3988310000M_1)/2370625789 + (52490000M_7)/24272619$\\
$\mu_{X_4}=  (16585421000M_7)/12937305927 - (134085884500M_1)/97195657349$\\
$\mu_{X_5}=     - (9147500M_1)/1154713 - (10000M_7)/3941$\\
By considering any of the cases in STEP 13, the system yields a contradiction. Since the system is inconsistent, there is no need to put additional nonlinear equations from STEP 7.
Therefore, we cannot find a signature $\mu$ and the system does not have the capacity to admit multiple equilibria. We exit the algorithm.

\begin{remark}
In ERM0-G, not only the network properties are essential for the algorithm but we are also particular with the kinetic order values of the kinetic system. From STEPS 1 to 9, we use the structure of the reaction network alone. Within these steps, the orientation is partitioned into equivalence classes. With respect to this partitioning, consequently, the reaction set is partitioned into fundamental classes. Until the assignment of sign patterns for the fundamental classes and the shelving assignment of the reactions in these classes, there is no use of the kinetic order values in the $T$-matrix. The use of these values takes place from STEPS 10 to 14 when we form the system of equations/inequalities until we solve such system.
\end{remark}

\subsection{Application to the Heck et al.'s Global Carbon Cycle Model}
\label{ex:heck}
The global carbon cycle model of Heck et al. \cite{heck} is built from the well-cited box model of Anderies et al. \cite{anderies}. The latter was calibrated according to empirically observed and simulated carbon cycle dynamics. The revised model also incorporates a societal intervention called terrestrial carbon dioxide removal (tCDR). In tCDR, terrestrial carbon is deliberately recovered and dumped into a new sink, referred as carbon engineering sink. A detailed discussion of the model calibration and assumptions can be found in \cite{heck}. 

In the CRN-based analysis of the power-law approximation of this Earth system, Fortun et al. \cite{fortun4} considered pooling the geological carbon pool and the new sink to form a passive carbon pool. Moreover, they separated the atmospheric carbon into two nodes: atmospheric carbon in the pre-industrial state and additional atmospheric carbon due to fossil fuel use. A biochemical map of the system is shown in Figure \ref{heck:pic} \cite{fortun4}. 
\begin{figure}
\begin{center}
\label{heck:pic}
\includegraphics[width=10cm,height=20cm,keepaspectratio]{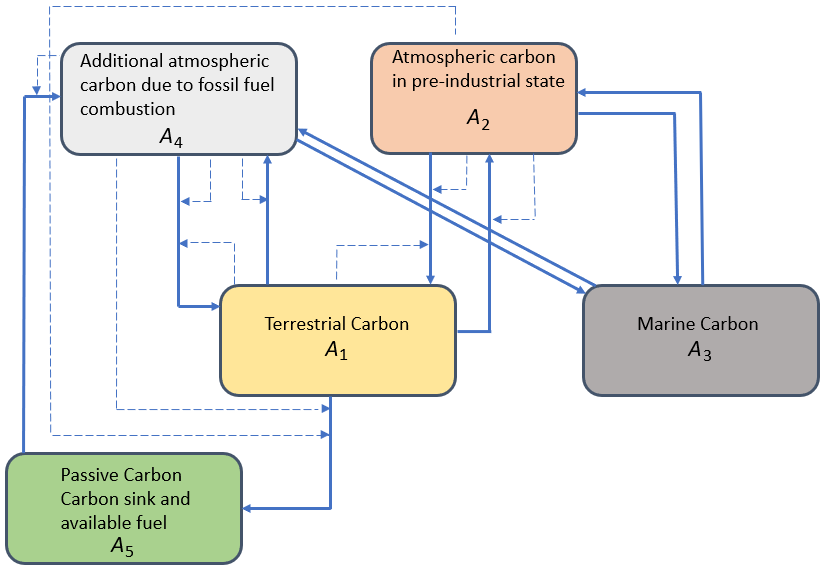}
\caption{Biochemical map of Heck et al.'s global carbon cycle model \cite{fortun4}.}
\end{center}
\end{figure}
The list of network numbers is provided in Table \ref{network:numbers1}. Because of the high deficiency of the network, it becomes a good candidate for the application of MSA to determine the capacity of multiple positive steady states of the system. Multistationarity in global climate implies that there may exist ``tipping points'' beyond which a return to the original state may be difficult or prolonged. By showing that multistationarity in a global climate model may exist, therefore, makes the search for the tipping point relevant. After the tipping point is determined, appropriate measures may then be set to avoid exceeding it.

The reaction network of the Heck et al.'s global carbon cycle model is given below \cite{fortun4}.
$$ \begin{array}{lll}
R_1: A_1 +2A_2 \to 2A_1 + A_2 & \ \ \ & R_6: A_1 + 2A_4  \to 2A_1 + A_4\\
R_2: A_1 + A_2  \to 2A_2  & \ \ \ & R_7: A_1 + A_4  \to 2A_4 \\
R_3: A_2  \to A_3 & \ \ \  & R_8: A_4  \to A_3\\
R_4: A_3  \to A_2  & \ \ \  & R_9: A_3 \to A_4  \\
R_5: A_4 + A_5  \to 2A_4 & \ \ \  & R_{10}: A_1 + A_2 + A_4 \to A_5 + A_2+A_4\\
\end{array}$$
The kinetic system has reactant-determined kinetics and the $T$-matrix is given below.\\
{\small
$\bordermatrix{%
& A_1+2A_2 & A_1 + A_2 & A_2 & A_3 & A_4+A_5 &  A_1 + 2A_4 & A_1 +A_4 & A_4 & A_1+A_2+A_4\cr
A_1 & 199.75 & 159.84 & 0  & 0 & 0 & -43.80& -56.13 & 0&1\cr
A_2 & -86.03 & -63.32 & 1  & 0 & 0 & 0& 0 & 0 &4.44\cr
A_3 & 0 & 0 & 0  & 1 & 0 & 0& 0 & 0&0\cr
A_4 & 0 & 0 & 0  & 0 & 1 & 21.42& 22.19 & 1&11.52\cr
A_5 & 0 & 0 & 0  & 0 & 1.54 & 0& 0 & 0&0\cr
}$
}
Choose ${\mathscr{O}}=\{R_1,R_2,R_4,R_5,R_6,R_7,R_8,R_{10}\}.$
Below is basis for $Ker{L_\mathscr{O}}$ obtained by solving $\sum\limits_{y \to y' \in \mathscr{O}} {{\alpha _{y \to y'}}\left( {y' - y} \right)}=0$.
\[\bordermatrix{%
& v^1 & v^2 & v^3 & v^4\cr
R_1 & 1 & 0 & 1  & 0 \cr
R_2 & 1 & 0 & 0  & 0 \cr
R_4 & 0 & 0 & 1  & 0 \cr
R_5 & 0 & 0 & 0  & 1 \cr
R_6 & 0 & 1 & -1  & 1 \cr
R_7 & 0 & 1 & 0  & 0 \cr
R_8 & 0 & 0 & 1  & 0 \cr
R_{10} & 0 & 0 & 0  & 1 \cr
}\]

A basis for $Ker^{\perp}L_{\mathscr{O}}\cap \Gamma_W$ is also given below.
\[\bordermatrix{%
& v_1' & v_2' & v_3' & v_4' \cr
w=1 & 1& 0 & 1 & 0 \cr
w=2 & 1 & 0 & 0  & 0\cr
w=3 & 0 & 0 & 1  & 0 \cr
w=4 & 0 & 0 & 0  & 1\cr
w=5 & 0 & 1 & -1  & 1\cr
w=6 & 0 & 1	 & 0  & 0 \cr
}
\bordermatrix{%
& b_1 & b_2  \cr
w=1 & -1 & 0  \cr
w=2 & 1 & 0  \cr
w=3 & 1 & 1   \cr
w=4 & 0 & -1   \cr
w=5 & 0 & 1  \cr
w=6 & 0 & -1  \cr
}\]

We can verify that our example has a forest basis for $Ker^{\perp}L_{\mathscr{O}}\cap \Gamma_W$. We will not present the details here. Without this fact, one can obtain a system of equations/inequalities up to STEP 14 and try if $\kappa$ can be solved after getting the value of a solution $\mu$. We pick the sign patterns to be positive and obtain the following shelving assignment. If any of ${{\cal U}_i}$, ${{\cal M}_i}$, or ${{\cal L}_i}$ does not appear in the list, then it is empty.\\
${{\cal M}_1} = \left\{ { R_1 : A_{1}+2 A_{2}\to 2A_1 + A_{2}    } \right\}$\\
${{\cal M}_2} = \left\{ { R_2: A_{1}+A_{2}\to 2A_{2}    } \right\}$\\
${{\cal L}_3} \ = \left\{ {R_3: A_2  \to A_3, R_4: A_3  \to A_2, 
R_8: A_4  \to A_3,R_9: A_3 \to A_4 
} \right\}$\\
${{\cal M}_4} = \left\{ {  R_5: A_{4} + A_{5}  \to 2A_4, R_{10}: A_1 + A_2 + A_4 \to A_5 + A_2+A_4 } \right\}$\\
${{\cal M}_5} = \left\{ {   R_6 :  A_{1}+2 A_{4}\to 2A_1 + A_{4} } \right\}$\\
${{\cal M}_6} = \left\{ {  R_7 :  A_{1}+ A_{4}\to 2A_{4} } \right\}$

We choose $M_2 < M_1 < M_3$ from $b^1$ and $M_5 < M_6 < M_3$ from $b^2$.
Thus, we obtain the following system.
\[ \begin{array}{c}
199.75{\mu _{{A_1}}} - 86.03{\mu _{{A_2}}} = {M_1}\\
159.84{\mu _{{A_1}}} - 63.32{\mu _{{A_2}}} = {M_2} \\
{\mu _{{A_4}}} + 1.54{\mu _{{A_5}}} = {M_4} = {\mu _{{A_1}}} + 4.44{\mu _{{A_2}}} + 11.52{\mu _{{A_4}}}\\
 - 43.80{\mu _{{A_1}}} + 21.42{\mu _{{A_4}}} = {M_5}\\
 - 56.13{\mu _{{A_1}}} + 22.19{\mu _{{A_4}}} = {M_6}\\
{\mu _{{A_2}}}<{\mu _{{A_3}}}<{\mu _{{A_4}}} < {M_3}\\
M_2 < M_1 < M_3\\
M_5 < M_6 < M_3
\end{array}\]
Since the system of inequalities has a nonzero solution $\mu$, the kinetic system has the capacity for multistationarity. A solution of the system together with the multiple steady states are given in Table \ref{tab:equilibria:ex:heck}. The computation of a particular set of rate constants is given in Appendix \ref{rate:constant:heck}.
\begin{table}
% table caption is above the table
\caption{Equilibria in Heck et al.'s Global Carbon Cycle Model}
\label{tab:equilibria:ex:heck}       % Give a unique label
% For LaTeX tables use
\centering
\begin{tabular}{ccccc}
%\hline\noalign{\smallskip}
%first & second \\
\noalign{\smallskip}\hline\noalign{\smallskip}
& $\mu$ & $\sigma \in S$ &  $c$** & $c$*\\
\noalign{\smallskip}\hline\noalign{\smallskip}
$A_1$& $-0.062895375$ &  $-1$  &  16.40466123 & 15.40466123 \\
$A_2$& $-0.262273058$  & $-1$   &       4.334651228 & 3.334651228 \\
$A_3$&$0.1$  &  1  &       9.508331945 & 10.50833194 \\
$A_4$& 0.291558477  & $1$   &       2.954105867 & 3.954105867 \\
$A_5$&$1.194680148$  &  1  &       0.434310288 & 1.434310288 \\
\noalign{\smallskip}\hline
\end{tabular}
\end{table}

\subsection{A Comparison with the Deficiency One Algorithm for PL-RDK systems}
\label{sect:def_one}
We compare the MSA and the DOA on regular networks. Specifically, we apply the MSA on the main example of \cite{fortun}, the Anderies et al.'s pre-industrial carbon cycle model in \cite{anderies}. 
Indeed, we can verify from Appendix \ref{HDA:anderies} that this deficiency-one model has the capacity for multistationarity for some rate constants \cite{fortun}.

\begin{definition}
A pair of complexes $\{i,j\}$ form a {\bf cut pair} if they are adjacent and the removal of the reaction arrow(s) between these complexes results in a separation of the linkage class containing them.
\end{definition}

Let $\mathscr{C}^1$, $\mathscr{C}^2$, ... , $\mathscr{C}^t$ denote the complex sets of the terminal strong linkage classes
and let $\mathscr{C}'= \bigcup _{k=1}^t \mathscr{C}^i$.

\begin{definition}
A reaction network $\mathscr{N}=\left(\mathscr{S},\mathscr{C},\mathscr{R}\right)$ is said to be {\bf regular} if the following conditions are satisfied.
\begin{itemize}
\item[i.]  The reaction vectors are positively dependent, i.e., there exists a set of positive numbers $\alpha_{ij}$ for all $(i ,j) \in \mathscr{R}$ such that $\sum\limits_{(i , j) \in \mathscr{R}} {{\alpha _{ij}}\left( {j - i} \right)}=0$.
\item[ii.] $\mathscr{N}$ is t-minimal.
\item[iii.] The complexes $i$ and $j$ form a cut pair $\forall \ i , j \in \mathscr{C}'$ such that $(i , j) \in \mathscr{R}$.
\end{itemize}
\end{definition}

Positive dependency is a necessary condition for the existence of a positive equilibrium. In this section, we present an example where MSA can obtain reasonable results for deficiency one, non-$t$-minimal networks and networks without the ``cut pair'' property, which are both outside the scope of the DOA for PL-RDK. 

\begin{example}
\label{ex:cutpairs}
Consider the following reaction network and its kinetic order matrix.
$$\bordermatrix{%
& A_1 & A_2 & A_3 \cr
R_1: A_1+A_3 \to A_1+A_2 & 0.5 & 0 & 0.5  \cr
R_2: A_1 + A_2 \to 2A_3 & 1 & 1 & 0  \cr
R_3: 2A_3 \to A_1+A_3 & 0 & 0 & 1  \cr
R_4: A_3 \to 0 & 0 & 0 & 1  \cr
R_5: A_3 \to A_3+A_2 & 0 & 0 & 1  \cr
}$$
The network has 2 linkage classes and 3 terminal strong linkage classes, and hence, not $t$-minimal. Its deficiency is 1, its rank is 3 and it is not weakly reversible. Moreover, the system is PL-RDK. Also, the complexes $A_1 +A_2$ and $2A_3$ do not form a cut pair. Since neither of last two conditions for regularity of a reaction network is satisfied, the DOA for PL-RDK is not applicable. We will also establish that this system has the capacity to admit multiple steady states using the MSA.

A basis for $Ker L_{\mathscr{O}}$ is
\[\left( {\begin{array}{*{20}{r}}
1&{ - 1}\\
1&0\\
1&0\\
0&1\\
0&1
\end{array}} \right)\]
which induces the following partition: \\${P_1} = \left\{ {{R_1}} \right\} = {C_1}$, ${P_2} = \left\{ {{R_2},{R_3}} \right\} = {C_2}$, and ${P_3} = \left\{ {{R_4},{R_5}} \right\} = {C_3}$.\\
Each reaction is irreversible so we have the following shelving assignment: 
${{\cal M}_1} = {C_1}$, ${{\cal M}_2} = {C_2}$, and ${{\cal M}_3} = {C_3}$. Thus, we have the equations: ${\mu _{{A_1}}} + {\mu _{{A_3}}} = {M_1}$ and ${\mu _{{A_1}}} + {\mu _{{A_2}}} = {M_2} = {M_3} = {\mu _{{A_3}}}$.
From the basis $\{(1,-1,1)\}$ for $Ker^{\perp}L_{\mathscr{O}}\cap \Gamma_W$, we have $M_1=M_2=M_3$ since we already established ${M_2} = {M_3}$. From these equations, we get $\mu=(1,0,1)$ which is stoichiometrically compatible with $\sigma=(3,0,2) \in S$. (You can choose other $\sigma$ which is stoichiometrically compatible with $\mu$, say $\sigma=\mu=(1,0,1)$.) Thus, the kinetic system has the capacity for multistationarity.
We choose $\kappa=(1,2,2,1,1)$, a linear combination of the given basis for $Ker L_{\mathscr{O}}$.

The equilibria $c^{**}$ and $c^{*}$ are given in Table \ref{tab:equilibria:ex:cutpairs}. We refer to Table \ref{tab:values:ex:cutpairs} and check that
$
\sum\limits_{y \to y' \in {\mathscr{R}}} {{\kappa _{y \to y'}}\left( {y' - y} \right) = 0}
$
and
$
\sum\limits_{y \to y' \in {\mathscr{R}}} {{\kappa _{y \to y'}}{e^{{T_{.y }\cdot \mu}}}\left( {y' - y} \right) = 0} 
$
are both satisfied since $\kappa_{y \to y'}e^{T_{.y}\cdot \mu}=k_{y \to y'}c^{*{T_{.y}}}$ is a multiple of the vector $\kappa$. 

\begin{table}
% table caption is above the table
\caption{Equilibria in Example \ref{ex:cutpairs}}
\label{tab:equilibria:ex:cutpairs}       % Give a unique label
% For LaTeX tables use
\centering
\begin{tabular}{ccccc}
%\hline\noalign{\smallskip}
%first & second \\
\noalign{\smallskip}\hline\noalign{\smallskip}
& $\mu$ & $\sigma \in S$ &  $c$** & $c$*\\
\noalign{\smallskip}\hline\noalign{\smallskip}
$A_1$& 1 &  3  &  1.745930121 & 4.745930121 \\
$A_2$&$0$  & $0$   &       1 & 1 \\
$A_3$&$1$  &  2  &       1.163953414 & 3.163953414 \\
\noalign{\smallskip}\hline
\end{tabular}
\end{table}

\begin{table}
% table caption is above the table
\caption{Summary of values for Example \ref{ex:cutpairs}}
\label{tab:values:ex:cutpairs}         % Give a unique label
% For LaTeX tables use
\centering
\begin{tabular}{cccccc}
%\hline\noalign{\smallskip}
%first & second \\
\noalign{\smallskip}\hline\noalign{\smallskip}
$y\to y'$&  $\kappa_{y \to y'}$ & $c^{**{T_{.y}}}$ & $c^{*{T_{.y}}}$ & $k_{y \to y'}=\dfrac{\kappa_{y \to y'}}{c^{{**}{T_{.y}}}}$ & $\kappa_{y \to y'}e^{T_{.y}\cdot \mu}$\\
\noalign{\smallskip}\hline\noalign{\smallskip}
 $R_1$    &  1 & 1.425545974 & 3.875035717& 0.701485619 & 2.718281828...$=1e$\\
 $R_2$    &       2 & 1.745930121 & 4.745930121& 1.145521219 & 5.436563657...$=2e$\\
$R_3$    &        2 & 1.163953414 & 3.163953414& 1.718281828 & 5.436563657...$=2e$\\
$R_4$    &        1 & 1.163953414 & 3.163953414& 0.859140914& 2.718281828...$=1e$\\
 $R_5$    &       1 & 1.163953414 & 3.163953414& 0.859140914& 2.718281828...$=1e$\\
\noalign{\smallskip}\hline
\end{tabular}
\end{table}
\end{example}

\section{Multistationarity Algorithm for Power-Law Non-Reactant-Determined Kinetics}
\label{sect:NDK}
Majority of the embedded GMA systems of the 15 identified BST models in \cite{arceo} are PL-NDK. Hence, there is a need of transforming these PL-NDK systems to dynamically equivalent PL-RDK systems. As a result, we combine the CF-RM method and the extension of the HDA to solve the problem of multistationarity of such systems.

\subsection{The CF-RM Method}
\label{chap:six}
\indent In this subsection, we present the CF-RM transformation method (transformation of complex factorizable kinetics by reactant multiples). Further discussion can be found in \cite{cfrm}. From a given PL-NDK system, one can construct a PL-RDK system using this method. In this process, at each reactant complex, the branching reactions are partitioned into CF-subsets (a CF-subset contains reactions having the same kinetic order vectors). 
If a reactant complex has more than one CF-subset, then it is an NF-reactant complex (which makes the system NDK). For each subset, a complex is added to both the reactant and the product complexes of a reaction. This leaves the reaction vectors unchanged. Hence, the stoichiometric subspace remains the same which guarantees the dynamic equivalence of the original and its transform. Under this method, the kinetic order matrix does not change as well.

Let $\mathscr{N}=\left(\mathscr{S},\mathscr{C},\mathscr{R}\right)$ be a reaction network and $\rho : \mathscr{R} \to \mathscr{C}$ be the reactant map. If $y \in \mathscr{C}$, then $\rho ^{-1}(y)$ is its reaction set. Let $\iota: \mathscr{R} \to \mathbb{R}^\mathscr{S}$ be the interaction map of the system. This map assigns to each reaction its kinetic order row in the kinetic order matrix. If $x \in \iota \left( \rho ^{-1}(y) \right)$ then $\{ r \in \rho ^{-1}(y) | \iota (r) =x \}$ is called a CF-subset of $y$. We denote the number of CF-subsets of $y$ by $N_R(y)$. There are $|\iota \left( \rho ^{-1}(y) \right)|$ such subsets. Note that a reactant complex is a CF-reactant complex if and only if $N_R(y)=1$.

The CF-RM method is given by the following steps.
\begin{itemize}
\item [1.] Determine the reactant set $\rho \left(\mathscr{R} \right)$.
\item [2.] Leave each CF-reactant complex unchanged.
\item [3.] At an NF-reactant complex, select a CF-subset containing the highest number of reactions. If there are several such subsets, then just choose one. Leave this CF-subset unchanged.
\item [4.] At this point, there are $N_R(y)-1$ remaining CF-subset. For each of these subsets, choose successively a multiple of $y$ which is not among the current set of reactants.
\end{itemize}

Since we leave each CF-reactant complex unchanged, no new reactant is introduced at a CF-reactant complex. Hence, the total number of new reactants is given by $\sum {\left( {{N_R}\left( y \right) - 1} \right)} $ with the sum taken over all reactants.

We provide an illustration of our computational approach. We show how it can determine multistationarity of a deficiency one PL-RDK.

\subsection{A Deficiency One PL-NDK System}
\label{sect:def:oneLPLNDK}
In this section, we present a weakly reversible PL-NDK system with deficiency one and we apply the combined CF-RM and HDA approach. Typically, the deficiency will increase. Hence, only the MSA can handle the resulting PL-RDK system. This example further emphasizes the advantages of the MSA over the DOA for deficiency one systems.
\begin{example}
\label{ex:cutpairs}
Consider the following reaction network.
$$ \begin{array}{llll}
R_1: 0 \to A_1 & & \ \ \ & R_3: A_1 \to 2A_1\\
R_2: A_1 \to 0  & & \ \ \ & R_4: 2A_1 \to 0 \\
\end{array}$$
The network has 1 linkage class and 1 strong linkage class which is terminal. Its deficiency is 1 and its rank is 1. Moreover, it is weakly reversible. Consider the following kinetic order values: for $R_1: 0$,  for $R_2: 0.5$, for $R_3: 1$, and for $R_4: 0.5$. Hence, the system is PL-NDK. Using the CF-RM transformation, we modify $R_3: 3A_1 \to 4A_1$. The deficiency of the CF-transform is 2. We set $\mathscr{O}=\{R_1,R_3,R_4\}$ and the following are bases for $Ker L_{\mathscr{O}}$ and $Ker L_{\mathscr{R}}$ are
\[\left( {\begin{array}{*{20}{r}}
{ - 1}&2\\
1&0\\
0&1
\end{array}} \right) {\text{ and }} \left( {\begin{array}{*{20}{r}}
1&{ - 1}&2\\
1&0&0\\
0&1&0\\
0&0&1\\
\end{array}} \right), {\text{ respectively. }}\]
The basis for $Ker L_{\mathscr{O}}$ given above induces the following partition: ${P_1} = \left\{ {{R_1}} \right\} \subset {C_1}=\left\{ {{R_1,R_2}} \right\}$, ${P_2} = \left\{ {{R_3}} \right\} = {C_2}$, and ${P_3} = \left\{ {{R_4}} \right\} = {C_3}$.
In $C_1$, consider sign pattern $(+,0)$ for $\left(g_W,h_W\right)$ and the rest are positive since they are nonreversible. Then, we have the following shelving assignment: 
${{\cal U}_1} = {C_1}$, ${{\cal M}_2} = {C_2}$, and ${{\cal M}_3} = {C_3}$ which gives $M_1<0{\mu _{{A_1}}} <0.5{\mu _{{A_1}}}$, $1{\mu _{{A_1}}} = {M_2}$, and $0.5{\mu _{{A_1}}} = {M_3}$.
Now, a basis for for $Ker^{\perp}L_{\mathscr{O}}\cap \Gamma_W$ is $\{(-1,-1,2)\}$ and we choose $M_1<M_3<M_2$. Moreover, take $\kappa=(2,1,1,1)$, a linear combination of the basis elements for $Ker L_{\mathscr{R}}$ so $
\sum\limits_{y \to y' \in {\mathscr{R}}} {{\kappa _{y \to y'}}\left( {y' - y} \right) = 0} 
$.
From the sign pattern of $C_1$ and we recall from our theory (for reversible reactions) that ${g_{0 \to {A_1}}} = {\kappa _{0 \to {A_1}}} - {\kappa _{{A_1} \to 0}} = 2 - 1 > 0$ and ${h_{0 \to {A_1}}} = {\kappa _{0 \to {A_1}}}{e^0} - {\kappa _{{A_1} \to 0}}{e^{0.5{\mu _{{A_1}}}}} = 0$. Hence, $2{e^0} = 1{e^{0.5{\mu _{{A_1}}}}}$ and ${\mu _{{A_1}}} = \ln 4$ which satisfies the obtained system. Choose $\sigma=3 \in S$ so $c^{**}=1$ and $c^{*}=4$. We can verify that
\[f\left( {{c^{**}}} \right) = \left( {\begin{array}{*{20}{c}}
1&{ - 1}&1&-2\\
\end{array}} \right)\left( {\begin{array}{*{20}{c}}
{2{{\left( {1} \right)}^{0}}}\\
{1{{\left( {1} \right)}^{0.5}}}\\
{1{{\left( {1} \right)}^1}}\\
{1{{\left( {1} \right)}^{0.5}}}
\end{array}} \right) 
{\rm \ and \ }
f\left( {{c^*}} \right) = \left( {\begin{array}{*{20}{c}}
1&{ - 1}&1&-2\\
\end{array}} \right)\left( {\begin{array}{*{20}{c}}
{2{{\left( {4} \right)}^{0}}}\\
{1{{\left( {4} \right)}^{0.5}}}\\
{1{{\left( {4} \right)}^1}}\\
{1{{\left( {4} \right)}^{0.5}}}
\end{array}} \right)\]
are both the zero vector.
\end{example}

\section{Conclusions and Outlook}
\label{sec:sum}
We summarize our results and provide some direction for future research.
\begin{itemize}
\item[1.] We modified the higher deficiency algorithm for mass action kinetics of Ji and Feinberg to handle the problem of determining whether a PL-RDK system has the capacity to admit multiple equilibria. This was done by replacing the role of the molecularity matrix by the $T$-matrix.
\item[2.] We applied the algorithm to the embedded network of the GMA model of anaerobic fermentation pathway of {\it Saccharomyces cerevisiae} and determined that it has no capacity to admit multiple equilibria, no matter what positive rate constants are assumed. On the other hand, we determined that the Heck et al.'s global carbon cycle model has the capacity for multistationarity for particular rate constants.
\item[3.] We compared the MSA and the DOA on regular networks by applying it on the Anderies et al.'s pre-industrial carbon cycle model. Indeed, the deficiency-one model has the capacity for multistationarity for some rate constants. This is consistent with the results given in \cite{fortun}. We also presented an example where MSA can obtain results for deficiency one, non-$t$-minimal networks and networks without the ``cut pair'' property since these are outside the scope of the DOA. 
\item[4.] We used the CF-RM transformation to convert a PL-NDK system to a dynamically equivalent PL-RDK  system. We also provided a weakly reversible PL-NDK system with deficiency one and we applied the MSA. Under the CF-RM, the deficiency of a network typically increases. Hence, only the MSA can handle the resulting PL-RDK system which further emphasizes its advantage.
\item[5.] We can also look into the nonlinear case where there is no forest basis and find an example that would be able to apply the algorithm. In addition, one can explore a possible extension of the algorithm to Rate constant-Interaction map-Decomposable (RID) kinetics with interaction parameter maps.
\end{itemize}

\section{Acknowledgement}
BSH acknowledges the support of DOST-SEI (Department of Science and Technology-Science
Education Institute), Philippines for the ASTHRDP Scholarship grant.

%\begin{acknowledgements}
%If you'd like to thank anyone, place your comments here
%and remove the percent signs.
%\end{acknowledgements}

% BibTeX users please use one of
%\bibliographystyle{spbasic}      % basic style, author-year citations
%\bibliographystyle{spmpsci}      % mathematics and physical sciences
%\bibliographystyle{spphys}       % APS-like style for physics
%\bibliography{}   % name your BibTeX data base

% Non-BibTeX users please use

\appendix
\section{Nomenclature}
\subsection{List of abbreviations}
%\FloatBarrier
%\begin{table}
% table caption is above the table
%\caption{caption}
%\label{tab:ap1}       % Give a unique label
% For LaTeX tables use
%\centering
\begin{tabular}{ll}
%\hline\noalign{\smallskip}
%first & second \\
\noalign{\smallskip}\hline\noalign{\smallskip}
Abbreviation& Meaning \\
\noalign{\smallskip}\hline\noalign{\smallskip}
ADA& advanced deficiency algorithm\\
BST&Biochemical Systems Theory\\
CKS& chemical kinetic system\\
CRN& chemical reaction network\\
CRNT& Chemical Reaction Network Theory \\
DOA&deficiency one algorithm  \\
DOT&deficiency one theorem  \\
GMA& generalized mass action\\
HDA& higher deficiency algorithm\\
MAK& mass action kinetics\\
MSA& multistationarity algorithm\\
PLK& power-law kinetics\\
PL-NDK& power-law non-reactant-determined kinetics\\
PL-RDK& power-law reactant-determined kinetics\\
SFRF& species formation rate function\\
\noalign{\smallskip}\hline
\end{tabular}
%\end{table}
%\FloatBarrier
\subsection{List of important symbols}
% table caption is above the table
%\caption{caption}
%\label{tab:ap2}       % Give a unique label
% For LaTeX tables use
%\centering
\begin{tabular}{ll}
%\hline\noalign{\smallskip}
%first & second \\
\noalign{\smallskip}\hline\noalign{\smallskip}
Meaning& Symbol \\
\noalign{\smallskip}\hline\noalign{\smallskip}
deficiency& $\delta$  \\
dimension of the stoichiometric subspace& $s$   \\
equivalence class $i$& $P_i$  \\
fundamental class $i$& $C_i$  \\
lower shelf contained in $C_i$& ${\cal L}_i$\\
middle shelf contained in $C_i$& ${\cal M}_i$\\
molecularity matrix& $Y$\\
number of linkage classes& $l$\\
number of strong linkage classes& $sl$\\
number of terminal strong linkage classes& $t$\\
rate constant associated to $y \to y'$& $k_{y \to y'}$\\
stoichiometric matrix& $N$\\
stoichiometric subspace& $S$\\
upper shelf contained in $C_i$& ${\cal U}_i$\\
\noalign{\smallskip}\hline
\end{tabular}

\section{Fundamentals of Chemical Reaction Networks and Kinetic Systems}
\label{prelim}
\indent In this section, we present some fundamentals of chemical reaction networks and chemical kinetic systems. These concepts are provided in \cite{feinberg} and \cite{ji}.

\begin{definition}
A {\bf chemical reaction network} $\mathscr{N}$ is a triple $\left(\mathscr{S},\mathscr{C},\mathscr{R}\right)$ of nonempty finite sets where $\mathscr{S}$, $\mathscr{C}$, and $\mathscr{R}$ are the sets of $m$ species, $n$ complexes, and $r$ reactions, respectively, such that
$\left( {{C_i},{C_i}} \right) \notin \mathscr{R}$ for each $C_i \in \mathscr{C}$; and
for each $C_i \in \mathscr{C}$, there exists $C_j \in \mathscr{C}$ such that $\left( {{C_i},{C_j}} \right) \in \mathscr{R}$ or $\left( {{C_j},{C_i}} \right) \in \mathscr{R}$.
\end{definition}
We can view $\mathscr{C}$ as a subset of $\mathbb{R}^\mathscr{S}_{\ge 0}$. The ordered pair $\left( {{C_i},{C_j}} \right)$ corresponds to the reaction ${C_i} \to {C_j}$.

\begin{definition}
The {\bf molecularity matrix}, denoted by $Y$, is an $m\times n$ matrix such that $Y_{ij}$ is the stoichiometric coefficient of species $X_i$ in complex $C_j$.
The {\bf incidence matrix}, denoted by $I_a$, is an $n\times r$ matrix such that 
$${\left( {{I_a}} \right)_{ij}} = \left\{ \begin{array}{rl}
 - 1&{\rm{ if \ }}{C_i}{\rm{ \ is \ in \ the\ reactant \ complex \ of \ reaction \ }}{R_j},\\
 1&{\rm{  if \ }}{C_i}{\rm{ \ is \ in \ the\ product \ complex \ of \ reaction \ }}{R_j},\\
0&{\rm{    otherwise}}.
\end{array} \right.$$
The {\bf stoichiometric matrix}, denoted by $N$, is the $m\times r$ matrix given by 
$N=YI_a$.
\end{definition}

\begin{definition}
The {\bf reaction vectors} for a given reaction network $\left(\mathscr{S},\mathscr{C},\mathscr{R}\right)$ are the elements of the set $\left\{{C_j} - {C_i} \in \mathbb{R}^\mathscr{S}|\left( {{C_i},{C_j}} \right) \in \mathscr{R}\right\}.$
\end{definition}

\begin{definition}
The {\bf stoichiometric subspace} of a reaction network $\left(\mathscr{S},\mathscr{C},\mathscr{R}\right)$, denoted by $S$, is the linear subspace of $\mathbb{R}^\mathscr{S}$ given by $$S = span\left\{ {{C_j} - {C_i} \in \mathbb{R}^\mathscr{S}|\left( {{C_i},{C_j}} \right) \in \mathscr{R}} \right\}.$$ The {\bf rank} of the network, denoted by $s$, is given by $s=\dim S$. The set $\left( {x + S} \right) \cap \mathbb{R}_{ \ge 0}^\mathscr{S}$ is said to be a {\bf stoichiometric compatibility class} of $x \in \mathbb{R}_{ \ge 0}^\mathscr{S}$.
\end{definition}

\begin{definition}
Two vectors $x, x^{*} \in {\mathbb{R}^\mathscr{S}}$ are {\bf stoichiometrically compatible} if $x-x^{*}$ is an element of the stoichiometric subspace $S$.
\end{definition}

\begin{definition}
A vector $x \in {\mathbb{R}^\mathscr{S}}$ is {\bf stoichiometrically compatible} with the stoichiometric subspace $S$ if 
$sign\left( {{x_s}} \right) = sign\left( {{\sigma _s}} \right)\forall s \in \mathscr{S}$
for some $\sigma \in S$.
\end{definition}

We can view complexes as vertices and reactions as edges. With this, chemical reaction networks can be seen as graphs. At this point, if we are talking about geometric properties, {\bf vertices} are complexes and {\bf edges} are reactions. If there is a path between two vertices $C_i$ and $C_j$, then they are said to be {\bf connected}. If there is a directed path from vertex $C_i$ to vertex $C_j$ and vice versa, then they are said to be {\bf strongly connected}. If any two vertices of a subgraph are {\bf (strongly) connected}, then the subgraph is said to be a {\bf (strongly) connected component}. The (strong) connected components are precisely the {\bf (strong) linkage classes} of a chemical reaction network. The maximal strongly connected subgraphs where there are no edges from a complex in the subgraph to a complex outside the subgraph is said to be the {\bf terminal strong linkage classes}.
We denote the number of linkage classes, the number of strong linkage classes, and the number of terminal strong linkage classes by $l, sl,$ and $t$, respectively.
A chemical reaction network is said to be {\bf weakly reversible} if $sl=l$, and it is said to be {\bf $t$-minimal} if $t=l$.

\begin{definition}
For a chemical reaction network, the {\bf deficiency} is given by $\delta=n-l-s$ where $n$ is the number of complexes, $l$ is the number of linkage classes, and $s$ is the dimension of the stoichiometric subspace $S$.
\end{definition}

\begin{definition}
A {\bf kinetics} $K$ for a reaction network $\left(\mathscr{S},\mathscr{C},\mathscr{R}\right)$ is an assignment to each reaction $j: y \to y' \in \mathscr{R}$ of a rate function ${K_j}:{\Omega _K} \to {\mathbb{R}_{ \ge 0}}$ such that $\mathbb{R}_{ > 0}^\mathscr{S} \subseteq {\Omega _K} \subseteq \mathbb{R}_{ \ge 0}^\mathscr{S}$, $c \wedge d \in {\Omega _K}$ if $c,d \in {\Omega _K}$, and ${K_j}\left( c \right) \ge 0$ for each $c \in {\Omega _K}$.
Furthermore, it satisfies the positivity property: supp $y$ $\subset$ supp $c$ if and only if $K_j(c)>0$.
The system $\left(\mathscr{S},\mathscr{C},\mathscr{R},K\right)$ is called a {\bf chemical kinetic system}.
\end{definition}

\begin{definition}
The {\bf species formation rate function} (SFRF) of a chemical kinetic system is given by $$f\left( x \right) = NK(x)= \displaystyle \sum\limits_{{C_i} \to {C_j} \in \mathscr{R}} {{K_{{C_i} \to {C_j}}}\left( x \right)\left( {{C_j} - {C_i}} \right)}.$$
\end{definition}
The ODE or dynamical system of a chemical kinetics system is $\dfrac{{dx}}{{dt}} = f\left( x \right)$. An {\bf equilibrium} or {\bf steady state} is a zero of $f$.

\begin{definition}
The {\bf set of positive equilibria} of a chemical kinetic system $\left(\mathscr{S},\mathscr{C},\mathscr{R},K\right)$ is given by $${E_ + }\left(\mathscr{S},\mathscr{C},\mathscr{R},K\right)= \left\{ {x \in \mathbb{R}^\mathscr{S}_{>0}|f\left( x \right) = 0} \right\}.$$
\end{definition}

A chemical reaction network is said to admit {\bf multiple equilibria} if there exist positive rate constants such that the ODE system admits more than one stoichiometrically compatible equilibria.

\begin{definition}
A kinetics $K$ is a {\bf power-law kinetics} (PLK) if 
${K_i}\left( x \right) = {k_i}{{x^{{F_{i}}}}} $  $\forall i =1,...,r$ where ${k_i} \in {\mathbb{R}_{ > 0}}$ and ${F_{ij}} \in {\mathbb{R}}$. The power-law kinetics is defined by an $r \times m$ matrix $F$, called the {\bf kinetic order matrix} and a vector $k \in \mathbb{R}^\mathscr{R}$, called the {\bf rate vector}.
\end{definition}
If the kinetic order matrix is the transpose of the molecularity matrix, then the system becomes the well-known {\bf mass action kinetics (MAK)}.

\begin{definition}
A PLK system has {\bf reactant-determined kinetics} (of type PL-RDK) if for any two reactions $i, j$ with identical reactant complexes, the corresponding rows of kinetic orders in $F$ are identical. That is, ${f_{ik}} = {f_{jk}}$ for $k = 1,2,...,m$. A PLK system has {\bf non-reactant-determined kinetics} (of type PL-NDK) if there exist two reactions with the same reactant complexes whose corresponding rows in $F$ are not identical.
\end{definition}

\begin{definition} {\bf \cite{muller}}
\label{tmatrix}
The $m \times n$ matrix $\widetilde Y$ is given by 
\[{\left( {\widetilde Y} \right)_{ij}} = \left\{ \begin{array}{cl}
{\left( F \right)_{ki}} & {{\rm{   if}} \ j \ {\rm{ is \ a \ reactant \ complex \ of \ reaction \ }} k},\\
0  &     {\rm  otherwise}.
\end{array} \right.\]
\end{definition}

\begin{definition}
The $T$-matrix is the $m \times n_r$ truncated $\widetilde Y$ matrix where the nonreactant columns are removed.
\end{definition}

\section{Proofs of the Lemmas and the Main Theorem}
\begin{proof}[of Lemma \ref{converse1}]
We assume positive equilibria ${c}^{*}$ and ${c}^{**}$ are distinct so ${\mu _s} \ne 0$ for some $s \in \mathscr{S}$. Thus, $\mu$ is nonzero. By the monotonicity of $\ln$, $\ln \left( {{c_s}^*} \right) > \ln \left( {{c_s}^{**}} \right)$ whenever ${c_s}^* > {c_s}^{**}$. Hence, ${c_s}^* - {c_s}^{**}$ and $\ln \left( {{c_s}^*} \right) - \ln \left( {{c_s}^*} \right)$ have the same signs. Now, ${\mu _s} = \ln \left( {\dfrac{{{c_s}^*}}{{{c_s}^{**}}}} \right) = \ln \left( {{c_s}^*} \right) - \ln \left( {{c_s}^{**}} \right)$. Since ${c}^*$ and ${c}^{**}$ are stoichiometrically compatible, by definition ${c^*} - {c^{**}} \in S$. Then, $\mu$ is stoichiometrically compatible with $S$. Finally, take ${\kappa _{y \to y'}} = {k_{y \to y'}}{c}{^{**{T_{.y}}}} \ \forall \ y \to y' \in \mathscr{R}$ from the set of positive rate constants  $\left\{ {{k_{y \to y'}}|y' \to y \in \mathscr{R}} \right\}$. 
\end{proof}

\begin{proof}[of Lemma \ref{converse2}]
Since $\mu \ne 0$ then $\mu _s \ne 0$ for some $s\in \mathscr{S}$. Hence, ${\dfrac{{{c_s}^*}}{{{c_s}^{**}}} = {e^{{\mu _s}}} \ne 1}$ so ${c}^*$ and ${c}^{**}$ are distinct. Note that ${c_s}^*$ and ${c_s}^{**}$ must have the same sign and we set both of them to be positive. Let $\sigma \in S$ be stoichiometrically compatible with $\mu$.
If $\mu _s \ne 0$, then we set $c_s^* = \dfrac{{{\sigma _s}{e^{{\mu _s}}}}}{{{e^{{\mu _s}}} - 1}}$ and $c_s^{**} = \dfrac{{{\sigma _s}}}{{{e^{{\mu _s}}} - 1}}$ for all $s \in \mathscr{S}$. In this case, ${c_s}^* - {c_s}^{**} = {\sigma _s}\left( {\dfrac{{{e^{{\mu _s}}} - 1}}{{{e^{{\mu _s}}} - 1}}} \right) = {\sigma _s}$.
If $\mu _s=0$, then we set $c_s^* = c_s^{**} = p$ for some positive number $p$. In this case, ${c_s}^* - {c_s}^{**} = p - p = 0$.
Hence, ${c}^*$ and ${c}^{**}$ are stoichiometrically compatible.
Finally, take ${k_{y \to y'}} = \dfrac{{{\kappa _{y \to y'}}}}{{{c}{{^{**}}^{T_{.y}}}}}.$ 
\end{proof}

\begin{proof}[of Lemma \ref{rev_irrev4}]
Assume $y \to y'$ is irreversible. Then, ${g_{y \to y'}} = {\kappa _{y \to y'}} > 0$, ${h_{y \to y'}} = {\kappa _{y \to y'}}{e^{{T_{.y}} \cdot \mu }} > 0$, and ${\rho _{y \to y'}} = \dfrac{{{h_{y \to y'}}}}{{{g_{y \to y'}}}} = \dfrac{{{\kappa _{y \to y'}}{e^{{T_{.y}} \cdot \mu }}}}{{{\kappa _{y \to y'}}}} = {e^{{T_{.y}} \cdot \mu }}$.
Assume $y \to y'$ is reversible.
Suppose ${g_{y \to y'}} \ne 0$.
Now, ${\rho _{y \to y'}}{g_{y \to y'}} = {h_{y \to y'}}$, ${g_{y \to y'}} = {\kappa _{y \to y'}} - {\kappa _{y' \to y}}$ and ${h_{y \to y'}} = {\kappa _{y \to y'}}{e^{{T_{.y}} \cdot \mu }} - {\kappa _{y' \to y}}{e^{{T_{.y'}} \cdot \mu }}$.
Then, ${g_{y \to y'}}{e^{{T_{.y}} \cdot \mu }} = {\kappa _{y \to y'}}{e^{{T_{.y}} \cdot \mu }} - {\kappa _{y' \to y}}{e^{{T_{.y}} \cdot \mu }}$.
Also, ${\kappa _{y' \to y}}{e^{{T_{.y}} \cdot \mu }} - {\kappa _{y' \to y}}{e^{{T_{.y'}} \cdot \mu }} = {\rho _{y \to y'}}{g_{y \to y'}} - {g_{y \to y'}}{e^{{T_{.y}} \cdot \mu }}$
and ${\kappa _{y' \to y}}\left( {{e^{{T_{.y}} \cdot \mu }} - {e^{{T_{.y'}} \cdot \mu }}} \right) = {g_{y \to y'}}\left( {{\rho _{y \to y'}} - {e^{{T_{.y}} \cdot \mu }}} \right)$. Hence, \\${\kappa _{y' \to y}} = \dfrac{{{\rho _{y \to y'}} - {e^{{T_{.y}} \cdot \mu }}}}{{{e^{{T_{.y}} \cdot \mu }} - {e^{{T_{.y'}} \cdot \mu }}}}{g_{y \to y'}}.$
Similarly, we can solve for the value of ${\kappa _{y \to y'}}$ with
${\kappa _{y \to y'}} = \dfrac{{{\rho _{y \to y'}} - {e^{{T_{.y'}} \cdot \mu }}}}{{{e^{{T_{.y}} \cdot \mu }} - {e^{{T_{.y'}} \cdot \mu }}}}{g_{y \to y'}}.$ Thus, we obtain the following.
\begin{itemize}
\item[i.] If $y \to y'$ is irreversible then ${g_{y \to y'}} > 0$, ${h_{y \to y'}} > 0$, and ${\rho _{y \to y'}} = {e^{{T_{.y}} \cdot \mu }}$.
\item[ii.] Suppose $y \to y'$ is reversible.
\begin{itemize}
\item[a.] If ${g_{y \to y'}} > 0$, then either ${\rho _{y \to y'}} = {e^{{T_{.y}} \cdot \mu }} = {e^{{T_{.y'}} \cdot \mu }}$, ${\rho _{y \to y'}} > {e^{{T_{.y}} \cdot \mu }} > {e^{{T_{.y'}} \cdot \mu }}$, or ${\rho _{y \to y'}} < {e^{{T_{.y}} \cdot \mu }} < {e^{{T_{.y'}} \cdot \mu }}$.
\item[b.] If ${g_{y \to y'}} < 0$, then either ${\rho _{y \to y'}} = {e^{{T_{.y'}} \cdot \mu }} = {e^{{T_{.y}} \cdot \mu }}$, ${\rho _{y \to y'}} > {e^{{T_{.y'}} \cdot \mu }} > {e^{{T_{.y}} \cdot \mu }}$, or ${\rho _{y \to y'}} < {e^{{T_{.y'}} \cdot \mu }} < {e^{{T_{.y}} \cdot \mu }}$.
\item[c.] If ${g_{y \to y'}} = 0$ and ${h_{y \to y'}} > 0$ then ${e^{{T_{.y}} \cdot \mu }} > {e^{{T_{.y'}} \cdot \mu }}$.
\item[d.] If ${g_{y \to y'}} = 0$ and ${h_{y \to y'}} < 0$ then ${e^{{T_{.y}} \cdot \mu }} < {e^{{T_{.y'}} \cdot \mu }}$.
\item[e.] If ${g_{y \to y'}} = {h_{y \to y'}} = 0$ then ${e^{{T_{.y}} \cdot \mu }} = {e^{{T_{.y'}} \cdot \mu }}$.
\end{itemize}
\end{itemize}
Now, let $i \in \{1,2,...,w\}$ and $y \to y' \in P_i$. Also, let ${y_i\to y_{i}'}$ be the representative of $P_i$. Then
${\rho _{y_i \to y_i'}}=\dfrac{{{h_{{y_i} \to {y_i}'}}}}{{{g_{{y_i} \to {y_i}'}}}}=\dfrac{{\alpha _{y_i \to y_i'}}{{h_{{y_i} \to {y_i}'}}}}{{\alpha _{y_i \to y_i'}}{{g_{{y_i} \to {y_i}'}}}}=\dfrac{{{h_{{y} \to {y}'}}}}{{{g_{{y} \to {y}'}}}}={\rho _{y \to y'}}$ for some nonzero ${\alpha _{y_i \to y_i'}}$.
\end{proof}

\begin{proof}[of Lemma \ref{rev_irrev2s}]
By considering the columns of the $T$-matrix instead of the columns of the molecularity matrix, we obtain an analogous proof to to one given in {\cite{ji}}. 
\end{proof}

\begin{lemma} {\cite{ji}} 
Suppose a reaction network satisfies the following properties for an orientation $\mathscr{O}$: $P_i$ ($i=0,1,2,...,w$) is defined by a representative ${{y_i} \to {y_i}'}$, $W=\{{{y_i} \to {y_i}'}|i=1,2,...,w\}\subseteq \mathscr{O}$, a given basis $\left\{ {{b^j}} \right\}_{j = 1}^q$ for $Ker^\perp {L_\mathscr{O}}\cap {\Gamma _W}$ such that its basis graph is a forest, a valid pair of sign patterns for ${h_W},{g_W}$, and a set of parameters\\$\left\{ {{\rho _W}\left( {{y_i} \to {y_i}'} \right)|{g_W}\left( {{y_i} \to {y_i}'} \right) \ne 0,i = 1,2,...,w} \right\}$ where the sign of ${{\rho _W}\left( {{y_i} \to {y_i}'} \right)}$ is the same as the ratio of the signs of ${{h_W}\left( {{y_i} \to {y_i}'} \right)}$ and ${{g_W}\left( {{y_i} \to {y_i}'} \right)}$.
Then ${h_W},{g_W} \in {\mathbb{R}^\mathscr{O}} \cap {\Gamma _W}$ satisfy Equations (\ref{eq:Ker3}), (\ref{eq:Ker4}), and (\ref{eq:Ker5}) if and only if ${\rho _W}\left( {{y_i} \to {y_i}'} \right)$'s satisfy the conditions in Lemma \ref{thm_multisets}.
\label{thm_multisets2}
\end{lemma}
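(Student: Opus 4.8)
The plan is to prove the biconditional by treating its two implications separately, following in each case the template of the corresponding argument in \cite{ji} with the columns $T_{.y}$ of the $T$-matrix playing the role of the columns of the molecularity matrix — so that the scalars $T_{.y}\cdot\mu$ replace the mass-action quantities $y\cdot\mu$ throughout. The ``only if'' direction requires essentially no new work: if $h_W,g_W\in\mathbb{R}^\mathscr{O}\cap\Gamma_W$ carry the given valid pair of sign patterns and satisfy Equations (\ref{eq:Ker3})--(\ref{eq:Ker5}), then these are exactly the hypotheses of Lemma \ref{thm_multisets}, whose conclusion is precisely that the $\rho_W$'s obey the stated multiset conditions. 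So that implication is just Lemma \ref{thm_multisets} restated (a forest basis being in particular a basis).

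The substance is in the ``if'' direction. Assuming the $\rho_W(y_i\to y_i')$'s satisfy the conditions of Lemma \ref{thm_multisets}, the goal is to construct $g_W,h_W\in\mathbb{R}^\mathscr{O}\cap\Gamma_W$ carrying the prescribed valid sign patterns and solving (\ref{eq:Ker3})--(\ref{eq:Ker5}). First I would eliminate $h_W$ on $ND$ using (\ref{eq:Ker5}): there it is forced to equal $\rho_W\cdot g_W$ componentwise, which is automatically sign-consistent by the defining property of $\rho_W$, while $g_W$ vanishes on $D$ by definition of $D$. Since each basis vector $b^j$ is supported in $W$, Equations (\ref{eq:Ker3}) and (\ref{eq:Ker4}) then amount to the assertion that $g_W$ and $h_W$ are orthogonal to every $b^j$, i.e.\ lie in the orthogonal complement of $Ker^\perp L_\mathscr{O}\cap\Gamma_W$ inside $\Gamma_W$. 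The forestal hypothesis is what makes the construction possible: because the basis graph of $\{b^j\}_{j=1}^q$ is a forest, these orthogonality conditions decouple into triangular systems that can be solved one leaf at a time, each leaf determining a single previously-free scalar unknown — a component of $g_W$ on $ND$ via the relevant instance of (\ref{eq:Ker3}), or a component of $h_W$ on $D$ via the relevant instance of (\ref{eq:Ker4}) — after which the remaining values of $h_W$ on $ND$ are filled in from (\ref{eq:Ker5}), leaving a small number of free scalings (one per tree) still at our disposal.

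The hard part will be showing that this leaf-by-leaf procedure can always be completed while respecting the prescribed sign pattern, i.e.\ that the scalar one is obliged to solve for at leaf $j$ can be taken with the sign the valid pattern demands. This is exactly what the three alternatives of Lemma \ref{thm_multisets} supply: the strict inequality of an element of $Q_2^j$ over one of $Q_1^j$, its reverse, or the nonsegregated configuration are precisely the feasibility conditions under which the scalar equation at leaf $j$ — whose coefficients are governed by the partitions $R_+^j,R_-^j$ of $ND$ and $Q_+^j,Q_-^j$ of $D$ — admits a solution of the required sign once the remaining free scalings are chosen appropriately. Acyclicity of the forest then guarantees that a sign committed at one leaf is never contradicted at another, since no component of $g_W$ or $h_W$ is revisited; assembling the per-leaf values yields the desired $g_W$ and $h_W$. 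Because every algebraic identity invoked is that of \cite{ji} with $T_{.y}\cdot\mu$ substituted for $y\cdot\mu$, the remaining verifications are routine transcriptions of the mass-action proof.
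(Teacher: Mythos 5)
The first thing to note is that the paper contains no proof of Lemma \ref{thm_multisets2} at all: it is attributed to \cite{ji} and imported as a known result. Indeed, unlike Lemmas \ref{converse1}, \ref{rev_irrev4} and \ref{rev_irrev2s}, this statement mentions neither $\mu$ nor the $T$-matrix --- it concerns only $g_W$, $h_W$, $\rho_W$, the basis $\{b^j\}$ and Equations (\ref{eq:Ker3})--(\ref{eq:Ker5}) --- so it is literally Ji's mass-action lemma, with no PL-RDK adaptation needed. Your closing remark that the identities are ``those of \cite{ji} with $T_{.y}\cdot\mu$ substituted for $y\cdot\mu$'' is therefore moot here: nothing needs to be substituted, because those quantities never enter the statement. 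With that caveat, your overall strategy is a faithful reconstruction of the argument the paper is pointing to. The ``only if'' direction is, as you say, exactly Lemma \ref{thm_multisets}; and the reduction of the ``if'' direction --- force $h_W=\rho_W g_W$ on $ND$ via (\ref{eq:Ker5}), note $g_W$ vanishes on $D$, and observe that (\ref{eq:Ker3}) and (\ref{eq:Ker4}) become $b^j\cdot g_W=0$ and $b^j\cdot h_W=0$ --- is correct and is the right way to set up the construction.

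The gap is in the only place where the lemma has real content: the claim that, because the basis graph is a forest, the free scalar at each ``leaf'' can always be chosen with the sign demanded by the prescribed valid sign pattern, and that the three alternatives of Lemma \ref{thm_multisets} are \emph{exactly} the feasibility conditions for this. You assert this (``This is exactly what the three alternatives \dots supply'') rather than prove it, yet this equivalence is the entire ``if'' direction. In particular you never exhibit the scalar equation at a leaf, never show how the sets $R_+^j,R_-^j,Q_+^j,Q_-^j$ and the multisets $Q_1^j,Q_2^j$ parametrize its coefficient signs, and never verify that a nonsegregated configuration (case (iii) of Lemma \ref{thm_multisets}) genuinely yields a solution of the required sign --- this last case is the delicate one, since it is where an element must be interpolated strictly between two others. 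Likewise, ``acyclicity guarantees that a sign committed at one leaf is never contradicted at another'' needs an actual induction on the forest: distinct basis vectors can share support reactions (that is what the edges of the basis graph record), so components \emph{are} revisited, and one must argue that the previously fixed values enter the next leaf's equation only through quantities whose signs are already controlled. As a blind reconstruction of the proof strategy in \cite{ji} your outline is on target, but as a proof it stops exactly where the work begins.
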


\begin{proof}[of Theorem \ref{main_thm}]
By Lemma \ref{converse1}, (i) is immediate. Since positive and distinct equilibria exist, $Ker L_\mathscr{O}$ is nontrivial. Define $g,h \in \mathbb {R}^\mathscr{O}$ such that\\
${g_{y \to y'}} = \left\{ \begin{array}{ll}
{\kappa _{y \to y'}} - {\kappa _{y' \to y}}&{\rm{      if }} \ y \to y' \in \mathscr{O}{\rm{\  is \ reversible}}\\
{\kappa _{y \to y'}}&{\rm{             if }} \ y \to y' \in \mathscr{O}{\rm{ \  is \  irreversible}}
\end{array} \right.$
and\\
${h_{y \to y'}} = \left\{ \begin{array}{ll}
{\kappa _{y \to y'}}{e^{{T_{.y}} \cdot \mu }} - {\kappa _{y' \to y}}{e^{{T_{.y'}} \cdot \mu }} & {\rm{   if }} \ y \to y' \in \mathscr{O}{\rm{\ is \ reversible}}\\
{\kappa _{y \to y'}}{e^{{T_{.y}} \cdot \mu }} & {\rm{  if }} \ y \to y' \in \mathscr{O}{\rm{\ is \ irreversible}}
\end{array} \right..$\\
For $i=1,2,...,w$, let ${P_i}$ be the equivalence class with ${y_i} \to {y_i}'$ as representative and $M_i$ is associated with the nondegenerate fundamental class.
Let $\{b^j\}_{j=1}^q$ be a basis for $Ker^\perp {L_\mathscr{O}}\cap {\Gamma _W}$, if it exists. Also, let $g,h \in Ker {L_\mathscr{O}}$. Hence,
$b^j \cdot g = \sum\limits_{{y_i} \to {y_i}' \in W} {g_{{y_i} \to {y_i}'} b_{{y_i} \to {y_i}'}^j= 0}$
and
$b^j \cdot h = \sum\limits_{{y_i} \to {y_i}' \in W} {h_{{y_i} \to {y_i}'} b_{{y_i} \to {y_i}'}^j= 0}$ for $j=1,2,...,q$.
These two equations can be written as Equations (\ref{eq:Ker3}), (\ref{eq:Ker4}), and (\ref{eq:Ker5}).
Hence, (iii) which leads to (iv).

On the other hand, by Lemma \ref{converse2}, we only need to solve show the existence of $\kappa$. By Lemma \ref{thm_multisets2} which requires the existence of the forest basis, Equations (\ref{eq:Ker3}), (\ref{eq:Ker4}), and (\ref{eq:Ker5}) hold. From here, we obtain the values of $g_{y \to y'}$ and $h_{y \to y'}$. These yield $\kappa$. 
\end{proof}

\section{Computation Details for Section \ref{applications:gma}}

\subsection{Computation of a Particular Set of Rate Constants for the Heck et al.'s Carbon Cycle Model}
\label{rate:constant:heck}
Table \ref{tab:values:heck} is a summary of values for the Heck et al.'s terrestrial carbon recovery model. The rate constant vector $k_{y \to y'}=\dfrac{\kappa_{y \to y'}}{c^{{**}{T_{.y}}}}$ is indicated. Note that $\kappa_{y \to y'}$ was solved by getting the scalars in the linear combination of a basis for $Ker L_{\mathscr{R}}$ such that $$
\sum\limits_{y \to y' \in {\mathscr{R}}} {{\kappa _{y \to y'}}\left( {y' - y} \right) = 0} 
\label{equi3}
{\rm{ \ and \ }}
\sum\limits_{y \to y' \in {\mathscr{R}}} {{\kappa _{y \to y'}}{e^{{T_{.y }\cdot \mu}}}\left( {y' - y} \right) = 0}.
\label{equi4}
$$
\begin{table}
% table caption is above the table
\caption{Summary of values for for the Heck et al.'s Terrestrial Carbon Recovery Model}
\label{tab:values:heck}         % Give a unique label
% For LaTeX tables use
\centering
\begin{tabular}{cccccc}
%\hline\noalign{\smallskip}
%first & second \\
\noalign{\smallskip}\hline\noalign{\smallskip}
$y\to y'$&  $\kappa_{y \to y'}$ & $k_{y \to y'}$ & $\kappa_{y \to y'}e^{T_{.y}\cdot \mu}$\\
\noalign{\smallskip}\hline\noalign{\smallskip}
 $R_1$    &  $1.09 \times 10^{-5}$ & $1.4014\times 10^{-193}$ & 0.241022556\\
 $R_2$    &       $1.09 \times 10^{-5}$ & $1.4814\times 10^{-159}$ & 0.00768157\\
$R_3$    &       0.694736046 & 0.160274959 &0.534461087\\
$R_4$    &       0.694736046 & 0.073066028& 0.767802074\\
 $R_5$    &       1 & 1.222817833& 8.426368578\\
 $R_6$    & 1.58138827 & $2.1781\times 10^{43}$& 12814.12188\\
 $R_7$    &      0.58138827 & $3.32525\times 10^{57}$& 12805.92885\\
$R_8$    &      1 & 0.338511903& 1.338511903\\
$R_9$    &        1 & 0.105170918 & 1.105170918\\
 $R_{10}$    &      1 & $3.44864\times 10^{-10}$ & 8.426368578\\
\noalign{\smallskip}\hline
\end{tabular}
\end{table}

\subsection{Application of the MSA on the Anderies et al.'s Pre-industrial Carbon Cycle}
\label{HDA:anderies}
\indent The reaction network and the $T$-matrix of the Anderies et al.'s pre-industrial carbon cycle in \cite{anderies} are given below.
\[\ \ \ \ \ {A_1} + 2{A_2} \to 2{A_1} + {A_2}\]
\[{A_1} + {A_2} \to 2{A_2}\]
\[ \ \ \ \ \ \ {A_2} \mathbin{\lower.3ex\hbox{$\buildrel\textstyle\rightarrow\over
{\smash{\leftarrow}\vphantom{_{\vbox to.5ex{\vss}}}}$}} {A_3}\]
\[\bordermatrix{%
  & A_1+2A_2 & A_1+A_2 & A_2 & A_3\cr
A_1 & {p_1} =  - 1.89& {p_2} =  - 0.27 & 0  & 0 \cr
A_2 & {q_1} = 0.43 & {q_2} = 0.44 & 1  & 0 \cr
A_3 & 0 & 0 & 0  & 1\cr
}\]
We choose the forward reactions so ${\mathscr{O}}=\left\{ {{A_1} + 2{A_2} \to 2{A_1} + {A_2},{A_1} + {A_2} \to 2{A_2},{A_2} \to {A_3}} \right\}.$
A basis for $Ker{L_\mathscr{O}}$, and the corresponding equivalence and fundamental classes are given below.
\[\begin{array}{*{20}{c}}
{{A_1} + 2{A_2} \to 2{A_1} + {A_2}}\\
{{A_1} + {A_2} \to 2{A_2}}\\
{{A_2} \to {A_3}}
\end{array}\left( {\begin{array}{*{20}{c}}
1\\
1\\
0
\end{array}} \right)\]
${P_0} = \left\{ {{A_2}\to{A_3}} \right\} \subset {C_0} = \left\{ {{A_2} \to {A_3},{A_3} \to {A_2}} \right\}$ \\
${P_1} = \left\{ {{A_1} + 2{A_2} \to 2{A_1} + {A_2},{A_1} + {A_2} \to 2{A_2}} \right\}={C_1} $\\
We choose $W = \left\{ {{A_1} + {A_2} \to 2{A_2}} \right\}$ with $w=1$. The resulting inequality system is automatically linear since $Ker^{\perp}L_{\mathscr{O}}\cap \Gamma_W$ is trivial. Since $P_1$ is nonreversible, the sign pattern for ${g_W}$ and ${h_W}$ for ${y_1} \to {y_1}'$ must be positive. We have the following shelving assignment:
${{\cal U}_1} = \left\{  \right\},{{\cal M}_1} = \left\{ {{A_1} + 2{A_2} \to 2{A_1} + {A_2},{A_1} + {A_2} \to 2{A_2}} \right\},{{\cal L}_1} = \left\{ {} \right\}$.
From the middle shelf, we obtain the equation
${p_1}{\mu _{{A_1}}} + {q_1}{\mu _{{A_2}}} = {M_1} = {p_2}{\mu _{{A_1}}} + {q_2}{\mu _{{A_2}}}.$
From $P_0$, we add ${\mu _{{A_2}}} = {\mu _{{A_3}}}$ to the system.
Since the $Ker^{\perp}L_{\mathscr{O}}\cap \Gamma_W$ is trivial, no additional equation or inequality can be obtained here.
We have the following system.
\begin{equation}
\begin{aligned}
{p_1}{\mu _{{A_1}}} + {q_1}{\mu _{{A_2}}} & = {M_1} = {p_2}{\mu _{{A_1}}} + {q_2}{\mu _{{A_2}}}\\
{\mu _{{A_2}}} & = {\mu _{{A_3}}}
\end{aligned}
\end{equation}
Hence, ${\mu _{{A_1}}} = \dfrac{{{q_2} - {q_1}}}{{{p_1} - {p_2}}}{\mu _{{A_2}}}$ and $\mu=\left( \dfrac{{{q_2} - {q_1}}}{{{p_1} - {p_2}}}{\mu _{{A_2}}} ,{\mu _{{A_2}}},{\mu _{{A_2}}} \right)$ with ${\mu _{{A_2}}}>0$.
We take $\left( { - 2,1,1} \right)$ so that $\mu$ is stoichiometrically compatible with $\sigma \in S$. Thus, $\mu$ is a signature and the reaction network has the capacity to admit multiple equilibria. This is consistent with the results of Fortun et al. when they applied the deficiency-one algorithm for PL-RDK in \cite{fortun}.

\subsection{A Remark on the Steps of the MSA}
\begin{remark}
In the case where the first attempt in adding $M$ equations/inequalities has no solution, we introduce STEP 15 where STEPS 13 and 14 are repeated for every choice of $M$ equations/inequalities. A choice is picking one subcase in STEP 13. If there is still no solution, one proceeds with STEP 16 where STEPS 9 to 15 are repeated for possible shelving assignments in STEP 9. If this happens again, we go to STEP 17 where STEPS 8 to 16 are repeated. All sign pattern choices for $g_W,h_W \in \mathbb{R}^\mathscr{O}\cap \Gamma_W$ in STEP 8 are repeated in this step. If there is no signature or pre-signature was found, then the system does not have the capacity to admit multiple stady states, no matter what positive values of the rate constants we assume.
\end{remark}

\section{Additional Useful Propositions}
\label{sect:app_theorem}

\indent The following propositions are useful in determining if a chemical reaction network does not have the capacity to admit multiple steady states if the reaction has at least one irreversible inflow reaction or irreversible outflow reaction.

\begin{proposition}
Let $\left(\mathscr{S},\mathscr{C},\mathscr{R},K\right)$ be a PL-RDK system. Suppose it has an irreversible inflow reaction $0 \to A$. Let $\mathcal D$ be the set of all reactions with $A$ in either reactant or product complex, not including the chosen reaction $0 \to A$. If $A$ does not appear for each simplified reaction vector in $\mathcal D$, then the system does not have the capacity to admit multiple equilibria.

\label{thm_inflow}
\end{proposition}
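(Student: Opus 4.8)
The plan is to bypass the algorithmic machinery entirely and prove the stronger statement that the system possesses \emph{no} positive equilibrium, which a fortiori precludes the capacity for multiple equilibria for any choice of positive rate constants. Everything hinges on the single coordinate of the species formation rate function $f$ indexed by the species $A$.

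First I would expand
\[
f_A(x) = \sum_{y \to y' \in \mathscr{R}} K_{y \to y'}(x)\,(y' - y)_A
\]
and sort the reactions by their $A$-component. A reaction that involves $A$ in neither its reactant nor its product complex contributes $0$. A reaction in $\mathcal{D}$ also contributes $0$: the hypothesis that $A$ does not appear in its simplified reaction vector means precisely $(y' - y)_A = 0$. The only surviving term is the inflow $0 \to A$, for which $(y' - y)_A = 1$, so $f_A(x) = K_{0 \to A}(x)$. Now, because the reactant complex of this reaction is the zero complex, the power-law form of the kinetics gives $K_{0 \to A}(x) = k_{0 \to A}\,x^{F}$ for the associated kinetic order row $F$ and $k_{0 \to A} \in \mathbb{R}_{>0}$; a positive constant times a power-law monomial is strictly positive throughout $\mathbb{R}_{>0}^{\mathscr{S}}$. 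Hence $f_A(x) > 0$ for every $x \in \mathbb{R}_{>0}^{\mathscr{S}}$, so $f$ has no positive zero, i.e. $E_+(\mathscr{S},\mathscr{C},\mathscr{R},K) = \emptyset$, which gives the conclusion.

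I do not anticipate a real obstacle; the argument is just a mass-balance observation on the species $A$. The one point that needs care is the correct reading of the hypothesis --- that ``$A$ does not appear in the simplified reaction vector'' of each reaction of $\mathcal{D}$ forces $(y'-y)_A = 0$ for those reactions (in particular, none of them is itself an inflow into a complex containing $A$) --- combined with the elementary fact that an inflow reaction out of the zero complex has, under power-law kinetics, a rate function that is a strictly positive monomial on the positive orthant and therefore never vanishes there. Only the PLK structure, not reactant-determinedness, is actually used.
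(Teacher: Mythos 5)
Your proof is correct, and it takes a genuinely different route from the paper's. The paper stays inside the MSA machinery: since $A$ occurs in no reaction vector other than that of $0 \to A$, the $A$-component of $\sum_{y\to y'}\alpha_{y\to y'}(y'-y)=0$ forces $\alpha_{0\to A}=0$ for every element of $Ker\,L_{\mathscr{O}}$, so $0\to A$ lands in the zeroth equivalence class $P_0$; being irreversible, it violates condition (a) of STEP 2, and the algorithm exits with ``no capacity for multiple equilibria.'' You instead bypass the algorithm and observe that $f_A(x)=K_{0\to A}(x)>0$ on $\mathbb{R}^{\mathscr{S}}_{>0}$, since every other reaction has $(y'-y)_A=0$ (either $A$ is absent from both complexes, or the reaction is in $\mathcal{D}$ and the hypothesis kills its $A$-component) and the rate of a reaction out of the zero complex is a positive monomial on the positive orthant. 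This yields the strictly stronger conclusion $E_+(\mathscr{S},\mathscr{C},\mathscr{R},K)=\emptyset$, is independent of reactant-determinedness (indeed of the power-law form, needing only the positivity property of the kinetics), and is self-contained. What the paper's phrasing buys is its placement as an early-exit test within the MSA workflow, and it makes visible the algorithmic shadow of the same fact: positive dependency of the reaction vectors fails because the coefficient of $0\to A$ in any dependency is forced to zero. Both arguments ultimately rest on the same mass-balance observation on the species $A$; yours is the more elementary and more informative rendering of it.
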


\begin{proof}
Suppose an irreversible inflow reaction $0 \to A$ exists. By definition of orientation, the reaction $0 \to A$ must be an element of any orientation. Let $\mathscr{O}$ be an orientation. In STEP 2 of the algorithm, we are getting $Ker L_\mathscr{O}$. At this point, we solve for the $\alpha_{y \to y'}$'s given the equation $\displaystyle \sum\limits_{y \to y' \in \mathscr{O}}^{} {{\alpha _{y \to y'}}\left( {y' - y} \right)}=0$. Since after simplifying the reaction vectors, $A$ does not appear in reaction vector $y'-y$ for each reaction $y \to y'$, the reaction $0 \to A$ corresponds to a row with all entries 0. Since $0 \to A$ is irreversible, then it must be placed on the zeroth equivalence class $P_0$. But for a system to have the capacity to admit multiple equilibria, each element in $P_0$ must be reversible (given in STEP 2). 
\end{proof}

\begin{proposition}
Let $\left(\mathscr{S},\mathscr{C},\mathscr{R},K\right)$ be a PL-RDK system. Suppose it has an irreversible outflow reaction $B \to 0$. Let $\mathcal D$ be the set of all reactions with $B$ in either reactant or product complex, not including the chosen reaction $B \to 0$. If $B$ does not appear for each simplified reaction vector in $\mathcal D$, then the system does not have the capacity to admit multiple equilibria.
\label{thm_outflow}
\end{proposition}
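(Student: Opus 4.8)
The plan is to mirror the argument for Proposition \ref{thm_inflow} almost verbatim, exchanging the roles of the inflow reaction $0 \to A$ and the outflow reaction $B \to 0$. What made the inflow proof work was not the direction of the reaction but the fact that its reaction vector has a nonzero component in a coordinate where every other reaction vector vanishes; under the stated hypothesis the same holds for the outflow reaction $B \to 0$.

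First I would observe that, since $B \to 0$ is irreversible, $0 \to B \notin \mathscr{R}$, so by the definition of an orientation the reaction $B \to 0$ belongs to every orientation $\mathscr{O}$. Fix such an $\mathscr{O}$ and pass to STEP 2, i.e. the computation of $Ker\,L_\mathscr{O}$, which amounts to solving $\sum_{y \to y' \in \mathscr{O}} \alpha_{y \to y'}(y' - y) = 0$. The reaction vector of $B \to 0$ is $0 - B$, whose $B$-component equals $-1 \ne 0$. By hypothesis every reaction in $\mathcal D$ has $B$ cancelling in its \emph{simplified} reaction vector, while every reaction outside $\mathcal D \cup \{B \to 0\}$ does not involve $B$ at all; in either case the $B$-component of its reaction vector is $0$. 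Hence the scalar equation corresponding to the species $B$ reads $-\alpha_{B \to 0} = 0$, forcing $\alpha_{B \to 0} = 0$ for every element of $Ker\,L_\mathscr{O}$.

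By Lemma \ref{equiv_classes1}(i) (equivalently Remark \ref{rem:equivclass}), this places $B \to 0$ in the zeroth equivalence class $P_0$. But the necessary condition recorded in STEP 2 (statement (a)) requires every reaction in $P_0$ to be reversible with respect to $\mathscr{R}$, whereas $B \to 0$ is irreversible. This contradiction shows the system fails a necessary condition of Theorem \ref{main_thm}, so it cannot admit multiple equilibria for any choice of positive rate constants.

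I do not anticipate a genuine obstacle; the only point needing a little care is the bookkeeping around the phrase ``simplified reaction vector,'' namely checking that the hypothesis really does produce a zero $B$-component for \emph{all} reactions other than $B \to 0$ — both for reactions in $\mathcal D$, where $B$ occurs in a complex but cancels, and for reactions outside $\mathcal D$, where $B$ does not occur at all. Once that is pinned down, the remainder is a direct transcription of the proof of Proposition \ref{thm_inflow}.
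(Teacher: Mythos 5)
Your proposal is correct and matches the paper's approach: the paper's proof of Proposition \ref{thm_outflow} simply states that it is analogous to that of Proposition \ref{thm_inflow}, and your argument is exactly that analogue (the $B$-coordinate of the kernel equation forces $\alpha_{B \to 0} = 0$, placing the irreversible reaction $B \to 0$ in $P_0$ and violating the reversibility requirement of STEP 2). Your explicit justification of why $\alpha_{B\to 0}$ must vanish is, if anything, slightly more careful than the paper's wording.
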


\begin{proof}
The proof is similar to the proof of Proposition \ref{thm_inflow}. 
\end{proof}

The propositions above are usually applied for total representation of a reaction network where there are several independent variables. In this representation, for each interaction $X_1 \to X_2$ with regularity arrow from each element of $\{X_j\}$, the reaction $X_1+\sum X_j \to X_1+\sum X_j$ is associated, and any interaction without regulatory arrow are kept as it is \cite{arceo}.
By satisfying the assumptions, one can decide whether a system has the capacity to admit multiple equilibria.
\end{document}